\newcommand{\rs}{{\mathbb S}}
\newcommand{\re}{\mathbb{R}}
\newcommand{\mR}{\mathbb{R}}
\newcommand{\cpx}{\mathbb{C}}
\newcommand{\N}{\mathbb{N}}
\newcommand{\mF}{\mathcal{F}}
\newcommand{\mB}{\mathcal{B}}
\newcommand{\lmd}{\lambda}
\newcommand{\nn}{\nonumber}
\def\af{\alpha}
\def\rank{\mbox{rank}}
\def\mat{\mathbf{mat}}
\newcommand{\sig}{\sigma}
\newcommand{\Sig}{\Sigma}
\newcommand{\reff}[1]{(\ref{#1})}
\newcommand{\pt}{\partial}
\newcommand{\prm}{\prime}
\newcommand{\mc}[1]{\mathcal{#1}}
\newcommand{\mt}[1]{\mathtt{#1}}
\newcommand{\bdes}{\begin{description}}
\newcommand{\edes}{\end{description}}
\newcommand{\bal}{\begin{align}}
\newcommand{\eal}{\end{align}}
\newcommand{\bnum}{\begin{enumerate}}
\newcommand{\enum}{\end{enumerate}}
\newcommand{\bit}{\begin{itemize}}
\newcommand{\eit}{\end{itemize}}
\newcommand{\bea}{\begin{eqnarray}}
\newcommand{\eea}{\end{eqnarray}}
\newcommand{\be}{\begin{equation}}
\newcommand{\ee}{\end{equation}}
\newcommand{\baray}{\begin{array}}
\newcommand{\earay}{\end{array}}
\newcommand{\bsry}{\begin{subarray}}
\newcommand{\esry}{\end{subarray}}
\newcommand{\bca}{\begin{cases}}
\newcommand{\eca}{\end{cases}}
\newcommand{\bcen}{\begin{center}}
\newcommand{\ecen}{\end{center}}
\newcommand{\bbm}{\begin{bmatrix}}
\newcommand{\ebm}{\end{bmatrix}}
\newcommand{\bmx}{\begin{matrix}}
\newcommand{\emx}{\end{matrix}}
\newcommand{\bpm}{\begin{pmatrix}}
\newcommand{\epm}{\end{pmatrix}}
\newcommand{\btab}{\begin{tabular}}
\newcommand{\etab}{\end{tabular}}
\newtheorem{theorem}{Theorem}[section]
\theoremstyle{definition}
\newtheorem{exm}[theorem]{Example}
\newtheorem{alg}[theorem]{Algorithm}
\begin{document}

\title[Semidefinite Relaxations for Best Rank-$1$ Tensor Approximations]
{Semidefinite Relaxations for Best Rank-1 Tensor Approximations}

\author{Jiawang Nie}
\address{
Department of Mathematics, University of California, San Diego, 9500
Gilman Drive, La Jolla, CA 92093.} \email{njw@math.ucsd.edu}

\author{Li Wang}
\address{Department of Mathematics,
University of California, 9500 Gilman Drive, La Jolla, CA 92093.}
\email{liw022@math.ucsd.edu}

\begin{abstract}
This paper studies the problem of finding best rank-$1$
approximations for both symmetric and nonsymmetric tensors. For
symmetric tensors, this is equivalent to optimizing homogeneous
polynomials over unit spheres; for nonsymmetric tensors, this is
equivalent to optimizing multi-quadratic forms over multi-spheres.
We propose semidefinite relaxations, based on sum of squares
representations, to solve these polynomial optimization problems.
Their special properties and structures are studied. In
applications, the resulting semidefinite programs are often large
scale. The recent Newton-CG augmented Lagrangian method by Zhao, Sun
and Toh \cite{ZST10} is suitable for solving these semidefinite
relaxations. Extensive numerical experiments are presented to show
that this approach is efficient in getting best rank-$1$
approximations.
\end{abstract}

\keywords{form, polynomial, relaxation, rank-$1$ approximation,
semidefinite program, sum of squares, tensor}

\subjclass{15A18, 15A69, 90C22}


\maketitle

\section{Introduction}

Let $m$ and $n_1,\ldots, n_m$ be positive integers. A tensor of
order $m$ and dimension $(n_1,\ldots,n_m)$ is an array $\mc{F}$ that
is indexed by integer tuples $(i_1, \ldots, i_m)$ with $1 \leq i_j
\leq n_j$ ($j=1,\ldots, m$), i.e.,
\[
\mc{F} = (\mc{F}_{i_1,\ldots, i_m})_{ 1\leq i_1 \leq n_1, \ldots, 1
\leq i_m \leq n_m  }.
\]
The space of all such tensors with real (resp., complex) entries is
denoted as $\re^{n_1 \times \cdots \times n_m}$ (resp., $\cpx^{n_1
\times \cdots \times n_m}$). Tensors of order $m$ are called
$m$-tensors. When $m$ equals $1$ or $2$, they are regular vectors or
matrices. When $m=3$ (resp., $4$), they are called cubic (resp.,
quartic) tensors. A tensor $\mc{F} \in \re^{n_1 \times \cdots \times
n_m}$ is symmetric if $n_1 = \cdots = n_m$ and
\[
\mc{F}_{i_1,\ldots, i_m} = \mc{F}_{j_1,\ldots, j_m} \quad \mbox{ for
all } \quad (i_1,\ldots, i_m) \sim (j_1,\ldots, j_m),
\]
where $\sim$ means that $(i_1,\ldots, i_m)$ is a permutation of
$(j_1,\ldots, j_m)$. We define the norm of a tensor $\mF$ as: \be
\label{tensor:norm} \|\mF\|  = \left( \sum_{i_1=1}^{n_1}\cdots
\sum_{i_m=1}^{n_m} |\mF_{i_1, \ldots,i_m}|^2  \right)^{1/2}. \ee For
$m=1$, $\|\mF\|$ is the vector 2-norm, and for $m=2$, $\|\mF\|$ is
the matrix Frobenius norm.

Every tensor can be expressed as a linear combination of outer
products of vectors. For vectors $u^1 \in \cpx^{n_1}, \ldots, u^m
\in \cpx^{n_m}$, their outer product $u^1 \otimes \cdots \otimes
u^m$ is the tensor in $\cpx^{n_1 \times \cdots \times n_m}$ such
that for all $1 \leq i_j \leq n_j$ ($j=1,\ldots, m$)
\[
(u^1 \otimes \cdots \otimes u^m)_{i_1,\ldots,i_m} = (u^1)_{i_1}
\cdots (u^m)_{i_m}.
\]
For every tensor $\mc{F}$ of order $m$, there exist tuples
$(u^{i,1},\ldots, u^{i,m})$ ($i=1,\ldots,r$), with each $u^{i,j} \in
\cpx^{n_j}$, such that \be \label{dcmp:rank-r} \mF
=\sum\limits_{i=1}^r u^{i,1} \otimes \cdots \otimes u^{i,m}. \ee The
smallest $r$ in the above is called the rank of $\mF$, and is
denoted as $\rank \, \mF$. When $\rank \,\mF=r$, \reff{dcmp:rank-r}
is called a rank decomposition of $\mF$, and we say that $\mF$ is a
rank-$r$ tensor.

Tensor problems have wide applications in chemometrics, signal
processing and high order statistics \cite{Com00}. For the theory
and applications of tensors, we refer to Comon et al.~\cite{CGLM08},
Kolda and Bader \cite{KolBad09}, and Landsberg \cite{Land12}. When
$m \geq 3$, determining ranks of tensors and computing rank
decompositions are NP-hard (cf.~Hillar and Lim \cite{HiLi13}). We
refer to Brachat et al.~\cite{BCMT10}, Bernardi et
al.~\cite{BBCM13}, Oeding and Ottaviani~\cite{OedOtt13} for tensor
decompositions. When $r>1$, the problem of finding best rank-$r$
approximations may be ill-posed, because a best rank-$r$
approximation might not exist, as discovered by De Silva and Lim
\cite{DeSLim08}. However, a best rank-$1$ approximation always
exists. It is also NP-hard to compute best rank-$1$ approximations.
This paper studies best real rank-$1$ approximations for real
tensors (i.e., tensor entries are real numbers). We begin with some
reviews on this subject.

\subsection{Nonsymmetric tensor approximations}

Given a tensor $\mF \in \re^{n_1 \times \cdots \times n_m}$, we say
that a tensor $\mB$ is a best rank-$1$ approximation of $\mF$ if it
is a minimizer of the least squares problem \be
\label{bstaprox:nosym} \min_{ \mc{X} \in \re^{n_1 \times \cdots
\times n_m}, \rank \,\mc{X} =1} \qquad \|\mF - \mc{X} \|^2. \ee This
is equivalent to a homogeneous polynomial optimization problem, as
shown in \cite{LMV00b}.
For convenience of description, we define the homogeneous polynomial
\be \label{df:F(x):nsy} F ( x^{1}, \ldots, x^{m}) := \sum_{1 \leq
i_1 \leq n_1, \ldots, 1 \leq i_m \leq n_m} \mF_{i_{1},\ldots, i_{m}}
(x^{1})_{i_1} \cdots (x^{m})_{i_m}, \ee which is in $x^{1} \in
\re^{n_1}, \ldots, x^{m} \in \re^{n_m}$. Note that $F( x^{1},
\ldots, x^{m}) $ is a multi-linear form (a form is a homogeneous
polynomial), since it is linear in each $x^{j}$. De Lathauwer, De
Moor and Vandewalle \cite{LMV00b} proved the following result.

\begin{theorem} \emph{(}\cite[Theorem 3.1]{LMV00b}\emph{)} \label{thm:LLM00}
For a tensor $\mF \in \re^{n_1 \times \cdots \times n_m}$, the
rank-$1$ approximation problem \reff{bstaprox:nosym} is equivalent
to the maximization problem \be \label{maxF:sph:nsym} \left\{
\baray{cl} \underset{ x^{1} \in \re^{n_1}, \ldots, x^{m} \in
\re^{n_m} }{\max} &
          |F ( x^{1}, \ldots, x^{m})| \\
 \emph{s.t.} & \|x^{1}\| = \cdots =  \|x^{m}\|=1, \earay
 \right.
 \ee
that is, a rank-$1$ tensor $\lmd \cdot (u^{1} \otimes \cdots \otimes
u^{m})$, with $\lmd \in \re$ and each $\|u^{i}\| =1$, is a best
rank-$1$ approximation for $\mF$ if and only if $(u^{1}, \ldots,
u^{m})$ is a global maximizer of \reff{maxF:sph:nsym} and $\lmd =
F(u^{1}, \ldots, u^{m})$. Moreover, it also holds that
\[
\| \mF - \lmd \cdot (u^{1} \otimes \cdots \otimes u^{m}) \|^2 = \|
\mF \|^2 - \lmd^2.
\]
\end{theorem}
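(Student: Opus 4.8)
The plan is to reduce the least-squares problem \reff{bstaprox:nosym} to a one-dimensional optimization in the scalar $\lmd$ for each fixed choice of unit vectors, and then recognize the residual as governed by $|F|$. Every rank-$1$ tensor $\mc{X} \in \re^{n_1 \times \cdots \times n_m}$ can be written as $\mc{X} = \lmd \cdot (u^1 \otimes \cdots \otimes u^m)$ with $\lmd \in \re$ and each $\|u^j\| = 1$: starting from $\mc{X} = v^1 \otimes \cdots \otimes v^m$ one normalizes each nonzero factor and collects the scaling constants into $\lmd$. So I would first replace the constraint $\rank\, \mc{X} = 1$ by optimization over the parameters $(\lmd, u^1, \ldots, u^m)$ subject to $\|u^j\| = 1$.

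The core computation is to expand the squared residual. Writing $\langle \cdot, \cdot \rangle$ for the inner product associated with the norm \reff{tensor:norm}, I would use $\|\mc{X}\|^2 = \lmd^2 \|u^1 \otimes \cdots \otimes u^m\|^2 = \lmd^2$, where the multiplicativity $\|u^1 \otimes \cdots \otimes u^m\| = \prod_{j=1}^m \|u^j\| = 1$ follows directly from the definitions of the outer product and the norm. Matching indices in the definition of $F$ in \reff{df:F(x):nsy} gives $\langle \mF, \mc{X}\rangle = \lmd \sum_{i_1, \ldots, i_m} \mF_{i_1,\ldots,i_m}(u^1)_{i_1}\cdots(u^m)_{i_m} = \lmd F(u^1, \ldots, u^m)$. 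Combining these,
\be
\| \mF - \mc{X} \|^2 = \|\mF\|^2 - 2\lmd\, F(u^1, \ldots, u^m) + \lmd^2.
\ee

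With the residual in this form, I would optimize in two stages. For fixed unit vectors $u^1, \ldots, u^m$, the right-hand side is a convex quadratic in $\lmd$ minimized at $\lmd^\ast = F(u^1, \ldots, u^m)$, with minimum value $\|\mF\|^2 - F(u^1, \ldots, u^m)^2$; this already yields the claimed identity $\|\mF - \lmd^\ast \cdot (u^1 \otimes \cdots \otimes u^m)\|^2 = \|\mF\|^2 - (\lmd^\ast)^2$. Minimizing the residual over all rank-$1$ tensors is therefore equivalent to maximizing $F(u^1, \ldots, u^m)^2 = |F(u^1, \ldots, u^m)|^2$ over the multi-sphere, i.e., to solving \reff{maxF:sph:nsym}. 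A minimizer $\mc{X}$ thus corresponds exactly to a global maximizer $(u^1, \ldots, u^m)$ of $|F|$ paired with $\lmd = F(u^1, \ldots, u^m)$, which gives the stated equivalence.

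The steps above are each elementary, so the main point requiring care is the parametrization itself: verifying that optimizing over normalized rank-$1$ tensors loses nothing (including sign conventions, since replacing $u^1$ by $-u^1$ flips the sign of both $\lmd$ and $F$, justifying the absolute value in \reff{maxF:sph:nsym}) and that the multiplicativity of the tensor norm on outer products holds. Once these bookkeeping facts are in place, the reduction to a scalar quadratic makes the rest automatic.
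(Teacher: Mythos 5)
Your proposal is correct. Note that the paper itself gives no proof of this statement --- it is quoted directly from De Lathauwer, De Moor and Vandewalle \cite{LMV00b} --- so there is no internal argument to compare against; your derivation (parametrize rank-$1$ tensors as $\lmd\cdot(u^1\otimes\cdots\otimes u^m)$ with unit factors, use multiplicativity of the norm on outer products and $\langle\mF,\mc{X}\rangle=\lmd\,F(u^1,\ldots,u^m)$ to write the residual as $\|\mF\|^2-2\lmd F+\lmd^2$, and minimize the scalar quadratic) is exactly the standard completion-of-squares argument from that reference, and all the bookkeeping points you flag (sign ambiguity of the representation, closure of the rank-$1$ set under $\lmd=0$) are handled correctly.
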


By Theorem~\ref{thm:LLM00}, to find a best rank-$1$ approximation,
it is enough to find a global maximizer of the multi-linear
optimization problem \reff{maxF:sph:nsym}. There exist methods on
finding rank-$1$ approximation, like the alternating least squares
(ALS), truncated high order singular value decomposition (T-HOSVD),
higher-order power method (HOPM), and Quasi-Newton methods. We refer
to Zhang and Golub~\cite{Tongrank2001}, De Lathauwer, De Moor and
Vandewalle \cite{HOSVD2000,LMV00b}, Savas and Lim \cite{SavLim10}
and the references therein. An advantage of these methods is that
they can be easily implemented. These kinds of methods typically
generate a sequence that converges to a locally optimal rank-$1$
approximation or even just a stationary point. Even for the lucky
cases that they get globally optimal rank-$1$ approximations, it is
usually very difficult to verify the global optimality by these
methods.

\subsection{Symmetric tensor approximations}

Let $\mt{S}^{m}(\re^{n})$ be the space of real symmetric tensors of
order $m$ and in dimension $n$. Given $\mF \in \mt{S}^m(\re^n)$, we
say that $\mB$ is a best rank-$1$ approximation of $\mF$ if it is a
minimizer of the optimization problem \be \label{mindist:sym}
\min_{\mc{X} \in \re^{n\times \cdots \times n}, \, \rank \,\mc{X} =1
} \qquad \|\mF-\mc{X} \|^2. \ee When $\mF$ is symmetric, Zhang, Ling
and Qi \cite{ZLQ12} showed that \reff{mindist:sym} has a global
minimizer that belongs to $\mt{S}^m(\re^n)$, i.e.,
\reff{mindist:sym} has an optimizer that is a symmetric tensor. It
is possible that a best rank-$1$ approximation of a symmetric tensor
might not be symmetric. But there is always at least one global
minimizer of \reff{mindist:sym} that is a symmetric rank-1 tensor.
Therefore, for convenience, by best rank-$1$ approximation for
symmetric tensors, we mean best symmetric rank-$1$ approximation.

A symmetric tensor in $\mt{S}^m(\re^n)$ is rank-$1$ if and only if
it equals $\lambda \cdot (u \otimes \cdots \otimes u)$ for some
$\lmd \in \re$ and $u\in \re^n$. For convenience, denote $u^{\otimes
m} :=u \otimes \cdots \otimes u$ ($u$ is repeated $m$ times). In the
spirit of Theorem~\ref{thm:LLM00} and the work~\cite{ZLQ12},
\reff{mindist:sym} is equivalent to the optimization problem \be
\label{max|f(x)|:sym}   \max_{x \in \re^n } \quad |f(x)| \quad
\text{s.t.} \quad \|x \| =  1, \ee where $f(x)  := F(x, \cdots ,x)$.
Therefore, if $u$ is a global maximizer of \reff{max|f(x)|:sym} and
$\lmd = f(u)$, then $\lambda \cdot u^{\otimes m}$ is a best rank-$1$
approximation of $\mF$. Clearly, to solve \reff{max|f(x)|:sym}, we
need to solve two maximization problems: \be  \label{sym:2:maxf}
\text{(I)}\quad \max\limits_{ x \in \mathbb{S}^{n-1}} f(x), \qquad
\text{(II)}\quad \max\limits_{ x \in \mathbb{S}^{n-1}} -f(x), \ee
where $\mathbb{S}^{n-1} :=\{x\in \mR^n:\|x\|=1\} $ is the $n-1$
dimensional unit sphere. Suppose $u^+,u^-$ are global maximizers of
(I), (II) in \reff{sym:2:maxf} respectively. By
Theorem~\ref{thm:LLM00}, if $|f(u^+)| \geq |f(u^-)|$, $f(u^+) \cdot
(u^+)^{\otimes m}$ is the best rank-$1$ approximation; otherwise,
$f(u^-) \cdot (u^-)^{\otimes m}$ is the best.

For an introduction to symmetric tensors, we refer to Comon, Golub,
Lim and Mourrain \cite{CGLM08}. Finding best rank-$1$
approximations for symmetric tensors is also NP-hard when $m\geq 3$.
There exist methods for computing rank-$1$ approximations for
symmetric tensors. When HOPM is directly applied, it is often
unreliable for attaining a good symmetric rank-$1$ approximation, as
pointed out in \cite{LMV00b}. To get good symmetric rank-$1$
approximations, Kofidis and Regalia \cite{SHOPM2002} proposed a
symmetric higher-order power method (SHOPM), Zhang, Ling and Qi
\cite{ZLQ12} proposed a modified power method. These methods can be
easily implemented. Like for nonsymmetric tensors, they often
generate a locally optimal rank-$1$ approximation or even just a
stationary point. Even for the lucky cases that a globally optimal
rank-$1$ approximation is found, these methods typically have
difficulty to verify its global optimality.
%
%
The problem \reff{max|f(x)|:sym} is related to extreme
$Z$-eigenvalues of symmetric tensors. Recently, Hu, Huang and
Qi~\cite{Hu2013} proposed a method for computing extreme
$Z$-eigenvalues for symmetric tensors of even orders. It is to
solve a sequence of semidefinite relaxations based on sum of squares
representations.

\subsection{Contributions of the paper.}

In this paper, we propose a new approach for computing best rank-$1$
tensor approximations, i.e., using semidefinite program (SDP)
relaxations. As we have seen, for nonsymmetric tensors, the problem
is equivalent to optimizing a multi-linear form over multi-spheres,
i.e., \reff{maxF:sph:nsym}; for symmetric tensors, it is equivalent
to optimizing a homogeneous polynomial over the unit sphere, i.e.,
\reff{max|f(x)|:sym}. Recently, there is extensive work on solving
polynomial optimization problems by using semidefinite relaxations
and sum of squares representations, e.g., Lasserre~\cite{Las01},
Parrilo and Sturmfels~\cite{ParStu03}, Nie and
Wang~\cite{NieLarge2012}. These relaxations are often tight, and
generally they are able to get global optimizers, which can be
verified mathematically. We refer to Lasserre's book \cite{Las09}
and Laurent's survey \cite{ML2009} for an overview for the work in
this area.

For a nonsymmetric tensor $\mF$, to get a best rank-$1$
approximation is equivalent to solving the multi-linear optimization
problem \reff{maxF:sph:nsym}. When $\mF$ is symmetric, to get a best
rank-$1$ approximation for $\mF$, we need to solve the homogeneous
optimization problem \reff{max|f(x)|:sym}. We solve these polynomial
optimization problems by using semidefinite relaxations, based on
sum of squares representations. In applications, the resulting
semidefinite programs are often large scale. The traditional
interior-point methods for semidefinite programs are generally too
expensive for solving them. 
The recent Newton-CG augmented Lagrangian method by Zhao, Sun and
Toh \cite{ZST10} is very efficient for solving such big semidefinite
programs, as shown in \cite{NieLarge2012}. In the paper, we use this
method to compute best rank-1 approximations for tensors.

The paper is organized as follows. In section 2, we show how to find
best rank-$1$ approximations by using semidefinite relaxations and
propose numerical algorithms based on them. Their special structures
and properties are also studied. In Section~3, we present extensive
numerical experiments to show the efficiency of these semidefinite
relaxations. In Section~4, we make some discussions about this
approach and future work.

\vspace{8pt} \noindent {\bf Notation} \, The symbol $\N$ (resp.,
$\re$, $\mathbb{C}$) denotes the set of nonnegative integers (resp.,
real numbers, complex numbers). For two tensors $\mc{X},\mc{Y} \in
\re^{n_1 \times \cdots n_m}$, define their inner product as
\[
\langle \mc{X}, \mc{Y} \rangle  := \sum_{1 \leq i_1 \leq n_1,\ldots,
1\leq i_m \leq n_m } \mc{X}_{i_1,\ldots,i_m}
\mc{Y}_{i_1,\ldots,i_m}.
\]
For $t\in \re$, $\lceil t\rceil$ denotes the smallest integer that
is not smaller than $t$. For integer $n>0$, $[n]$ denotes the set
$\{1,\cdots,n\}$. For $\alpha =(\af_1,\ldots, \af_n) \in \N^n$,
$|\alpha|:=\alpha_1+\cdots+\alpha_n$. Denote $\N^n_m = \{ \af \in
\N^n:\, |\af| = m\}$.
For $x =(x_1,\ldots,x_n) \in \re^n$, $x^\af$ denotes
$x_1^{\af_1}\cdots x_n^{\af_n}$. Let $\re[x]$ be the ring of
polynomials with real coefficients in $x$. A polynomial $p$ is said
to be sum of squares (SOS) if $p= p_1^2 + \cdots + p_k^2$ for some
$p_1,\ldots,p_k \in \re[x]$. The symbol $\Sig_{n,m}$ denotes the
cone of SOS forms in $(x_1,\ldots,x_n)$ and of degree $m$. For a set
$T$, $|T|$ denotes its cardinality. The symbol $e_i$ denotes the
$i$-th unit vector, i.e., $e_i$ is the vector whose $i$-th entry
equals one and all others equal zero. For a matrix $A$, $A^T$
denotes its transpose. For a symmetric matrix $W$, $W\succeq
0$(resp., $\succ 0$) means that $W$ is positive semidefinite (resp.,
definite). For any vector $u\in \re^N$, $\| u \| = \sqrt{u^Tu}$
denotes the standard Euclidean 2-norm.
%
%

\section{Semidefinite Relaxations and Algorithms}
\label{section:sdp:relaxation} \setcounter{equation}{0}

To find best rank-$1$ tensor approximations is equivalent to solving
some homogeneous polynomial optimization problems with sphere
constraints. In this section, we show how to solve them by using
semidefinite relaxations based on sum of squares representations,
and study their properties.

\subsection{Symmetric tensors of even orders}
\label{sec:hmgopt} \setcounter{equation}{0}

Let $\mF \in \mt{S}^m(\re^n)$ with $m=2d$ even. To get a best
rank-$1$ approximation of $\mF$, we need to solve
\reff{max|f(x)|:sym}, i.e., to maximize $|f(x)|$ over the unit
sphere. For this purpose, we need to find the maximum and minimum of
$f(x)$ over $\mathbb{S}^{n-1}$.

First, we consider the maximization problem: \be
\label{maxf:||x||=1} f_{\max}:= \max \quad f(x) \quad \text{s.t.}
\quad x^Tx = 1. \ee The form $f$ is in $x:=(x_1,\ldots,x_n)$,
determined by the tensor $\mF$ as \be \label{hmf:sym} f(x) = \sum_{1
\leq i_1, \ldots, i_m \leq n} \mF_{i_1,\ldots, i_m} x_{i_1}\cdots
x_{i_m}. \ee Let $[x^d]$ be the monomial vector:
\[
[x^d] := \bbm x_1^d & x_1^{d-1}x_2 & \cdots & x_{1}^{d-1}x_n &
\cdots & x_n^d \ebm^T.
\]
Its length is $\binom{n+d-1}{d}$. The outer product $[x^d][x^d]^T$
is a symmetric matrix with entries being monomials of degree $m$.
Let $A_{\alpha}$ be symmetric matrices such that
\[
[x^d][x^d]^T = \sum_{ \af \in \N_m^n }   A_{\alpha} x^\af.
\]
For $y\in \re^{ \N^n_m }$, define the matrix-valued function $M(y)$
as
\[
M(y) :=  \sum_{ \af \in \N^n_m }   A_{\alpha} y_\af.
\]
Then $M(y)$ is a linear pencil in $y$ (i.e., $M(y)$ is a linear
matrix-valued function in $y$). Let $f_\af, g_\af$ be the
coefficients such that
\[
f  :=  \sum_{ \af \in \N^n_m }   f_{\af} x^\af, \quad g :=(x^Tx)^d =
\sum_{ \af \in \N^n_m }   g_{\af} x^\af.
\]
For $y\in \re^{ \N^n_m }$, define
\[
\langle f, y \rangle : =  \sum_{ \af \in \N^n_m }   f_{\af} y_\af,
\quad \langle g, y \rangle : =    \sum_{ \af \in \N^n_m }   g_{\af}
y_\af.
\]

A semidefinite relaxation of \reff{maxf:||x||=1} is (cf.
\cite{Nieform2012,NieLarge2012}) \be \label{maxf:sdpr}
f_{\max}^{\text{sdp}} := \underset{y \in \re^{\N^n_m} }{\max}
 \quad  \langle f, y \rangle  \quad
 \text{s.t.} \quad M(y) \succeq 0, \quad \langle g, y \rangle = 1.
\ee It can be shown that the the dual of the above is \be
\label{maxf:sosr} \min \quad \gamma \quad \text{s.t.} \quad \gamma g
- f \in \Sig_{n,m}. \ee In the above, $\Sig_{n,m}$ denotes the cone
of SOS forms of degree $m$ and in variables $x_1,\ldots, x_n$.
Clearly, $f_{\max}^{\text{sdp}} \geq f_{\max}$. When
$f_{\max}^{\text{sdp}} = f_{\max}$, we say that the semidefinite
relaxation \reff{maxf:sdpr} is tight.

The dual \reff{maxf:sosr} is a linear optimization problem with SOS
type constraints. Recently, Hu, Huang and Qi \cite{Hu2013} proposed
an SOS relaxation method for computing extreme
Z-eigenvalues for symmetric tensors of even orders. Since not every
nonnegative form is SOS, they consider a sequence of nesting SOS
relaxations. The problem \reff{maxf:sosr} is equivalent to the
lowest order relaxation in \cite[\S4]{Hu2013}. In practice, the
relaxation \reff{maxf:sosr} is often tight, and it is frequently
used because of its simplicity. The SOS relaxation \reff{maxf:sosr}
was also proposed in \cite{Nieform2012} for optimizing forms
over unit spheres. Its approximation quality was also analyzed there.

The feasible set of \reff{maxf:sdpr} is compact, because
\[
\mbox{Trace}(M(y)) \leq \langle g, y\rangle = 1.
\]
So \reff{maxf:sdpr} always has a maximizer, say, $y^*$. If
$\rank\,M(y^*) =1$, then \reff{maxf:sdpr} is a tight relaxation. In
such case, there exists $v^+ \in \mathbb{S}^{n-1}$ such that $y^* =
[(v^+)^m]$, because of the structure of $M(y)$. Then, it holds that
\[
 f(v^+) = \langle f, y^* \rangle = f_{\max}^{\text{sdp}} \geq f_{\max}.
\]
This implies that $v^+$ is a global maximizer of
\reff{maxf:||x||=1}. The vector $v^+$ can be chosen numerically as
follows. Let $s \in [n]$ be the index such that
\[
y^*_{2de_s}  = \max_{ 1 \leq i \leq n} y^*_{2de_i}.
\]
Then choose $v^+$ as the normalization:
%
%
\be \label{cho:v+} \hat{u} = (y^*_{ (2d-1)e_{s} + e_1}, \ldots,
y^*_{(2d-1)e_{s} + e_n}),\quad  v^+ = \hat{u}/\|\hat{u}\|. \ee If
$\rank(M(y^*))>1$ but $M(y^*)$ satisfies a further rank condition,
then \reff{maxf:sdpr} is also tight
(cf.~\cite[Section~2.2]{NieLarge2012}). In computations, no matter
\reff{maxf:sdpr} is tight or not, the vector $v^+$ selected as in
\reff{cho:v+} can be used as an approximation for a maximizer of
\reff{maxf:||x||=1}.

Second, we consider the minimization problem: \be
\label{minf:||x||=1} f_{\min}:= \min \quad f(x) \quad \text{s.t.}
\quad x^Tx = 1. \ee Similarly, a semidefinite relaxation of
\reff{minf:||x||=1} is \be  \label{minf:sdpr} f^{\text{sdp}}_{\min}
:= \underset{z \in \re^{\N^n_m} }{\min}  \quad   \langle f, z
\rangle \quad \text{s.t.} \quad M(z) \succeq 0, \quad  \langle g, z
\rangle = 1. \ee Its dual optimization problem can be shown to be
\be \max \quad \eta \quad \text{s.t.} \quad
 f - \eta g   \in \Sig_{n,m}.
\ee Let $z^*$ be a minimizer of \reff{minf:sdpr}. Similarly, if
$\rank(M(z^*)) =1$, then \reff{minf:sdpr} is a tight relaxation. In
such case, a global minimizer $v^-$ can be found as follows: let $t
\in [n]$ be the index such that
\[
z^*_{2de_t}  = \max_{ 1 \leq i \leq n} z^*_{2de_i},
\]
then choose $v^-$ as the normalization:
\be \label{sel:v-} \tilde{u} =(z^*_{ (2d-1)e_{t} + e_1}, \ldots,
z^*_{(2d-1)e_{t} + e_n}), \quad   v^{-} = \tilde{u}/\| \tilde{u} \|.
\ee When $\rank\, M(z^*) >1$, (\ref{minf:sdpr}) might not be a tight
relaxation. In computations, the vector $v^-$ can be used as an
approximation for a minimizer of \reff{minf:||x||=1}.

Combining the above and inspired by Theorem~\ref{thm:LLM00}, we get
the following algorithm.

\begin{alg}  \label{alg:sym:even}
Rank-$1$ approximations for even symmetric tensors.
\\
\noindent {\bf Input:}\, A symmetric tensor $\mF \in
\mt{S}^m(\re^n)$ with an even order $m=2d$.

\noindent {\bf Output:}\,  A rank-$1$ tensor $\lmd \cdot u^{\otimes
m}$, with $\lmd \in \re$ and $u \in \mathbb{S}^{n-1}$.

\noindent {\bf Procedure:} \quad \bdes

\item [Step 1] Solve the semidefinite relaxation~\reff{maxf:sdpr}
and get an optimizer $y^*$.

\item [Step 2] Choose $v^+$ as in \reff{cho:v+}.
If $\rank\,M(y^*) = 1$, let $u^+=v^+$; otherwise, apply a nonlinear
optimization method to get a better solution $u^+$ of
\reff{maxf:||x||=1}, by using $v^+$ as a starting point. Let $\lmd^+
= f(u^+)$.

\item [Step 3] Solve the semidefinite relaxation~\reff{minf:sdpr}
and get an optimizer $z^*$.

\item [Step 4] Choose $v^{-}$ as in \reff{sel:v-}.
If $\rank\,M(z^*) = 1$, let $u^-=v^-$; otherwise, apply a nonlinear
optimization method to get a better solution $u^-$ of
\reff{minf:||x||=1}, by using $v^{-}$ as a starting point. Let
$\lmd^- = f(u^-)$.

\item [Step 5] If $|\lmd^+| \geq |\lmd^-|$,
output $(\lmd, u) := (\lmd^+, u^+)$; otherwise, output $(\lmd, u) :=
(\lmd^-, u^-)$.

\edes

\end{alg}

\noindent {\it Remark:} In Algorithm~\ref{alg:sym:even}, if
$\rank\,M(y^*) = \rank\,M(z^*) = 1$, then the output $\lmd \cdot
u^{\otimes m}$ is a best rank-$1$ approximation of $\mF$. If this
rank condition is not satisfied, then $\lmd \cdot u^{\otimes m}$
might not be best. The approximation qualities of semidefinite
relaxations \reff{maxf:sdpr} and \reff{minf:sdpr} are analyzed in
\cite[Section~2]{Nieform2012}.

When $m=2$ or $(n,m) = (3,4)$, the semidefinite relaxations
\reff{maxf:sdpr} and \reff{minf:sdpr} are always tight. This is
because every bivariate, or ternary quartic, nonnegative form is
always SOS (cf.~\cite{sosexm2000}). For other cases of $(n,m)$, this
is not necessarily true. However, this does not occur very often in
applications. In our numerical experiments, the semidefinite
relaxations \reff{maxf:sdpr} and \reff{minf:sdpr} are often tight.

We want to know when the ranks of $ M(y^*)$ and $M(z^*)$ are equal
to one. Clearly, when they have rank one, the relaxations
\reff{maxf:sdpr} and \reff{minf:sdpr} must be tight. Interestingly,
the reverse is often true, although it might be false sometimes (for
very few cases). This fact was observed in the field of polynomial
optimization. However, we are not able to find suitable references
for explaining this fact. Here, we give a natural geometric
interpretation for this phenomena, for lack of suitable references.
Let $\pt \Sig_{n,m}$ be the boundary of the cone $\Sig_{n,m}$.

\begin{theorem}  \label{thm:symeven:rk=1}
Let $f_{\max},f_{\max}^{\emph{sdp}}, f_{\min},f_{\min}^{\emph{sdp}},
y^*, z^*$ be as above. \bit

\item [(i)]  Suppose $f_{\max} = f_{\max}^{\emph{sdp}}$.
If $f_{\max} \cdot g - f$ is a smooth point of $\pt\Sig_{n,m}$, then
$\rank \, M(y^*) = 1$.

\item [(ii)] Suppose $f_{\min} = f_{\min}^{\emph{sdp}}$.
If $f - f_{\min} \cdot g$ is a smooth point of $\pt\Sig_{n,m}$, then
$\rank \, M(z^*) = 1$.

\eit
\end{theorem}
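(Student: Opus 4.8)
The plan is to exploit convex duality between the primal SDP and the dual SOS cone optimization, translating the statement about the rank of the optimal moment matrix $M(y^*)$ into a statement about the dimension of the face of $\Sigma_{n,m}$ that contains the optimal SOS certificate. The two parts (i) and (ii) are symmetric, so I would prove (i) and remark that (ii) follows by replacing $f$ with $-f$. Throughout, the key object is the boundary point $p := f_{\max}\cdot g - f \in \partial\Sigma_{n,m}$: by hypothesis it is a smooth point, meaning the supporting hyperplane of $\Sigma_{n,m}$ at $p$ is unique.

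\medskip

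First I would set up the duality carefully. Since $f_{\max} = f_{\max}^{\mathrm{sdp}}$, strong duality holds (the feasible set of \reff{maxf:sdpr} is compact with nonempty interior after a standard Slater-type check on the sphere), so $y^*$ and an optimal dual certificate $\gamma^* = f_{\max}$ exist and satisfy complementary slackness. The polynomial $p = \gamma^* g - f$ lies in $\Sigma_{n,m}$, so it admits a Gram representation $p = [x^d]^T G [x^d]$ with $G \succeq 0$; complementary slackness with $M(y^*) \succeq 0$ gives $\langle G, M(y^*)\rangle = 0$, hence $G \, y^*\text{-pairing} = 0$, i.e. the ranges of $G$ and $M(y^*)$ are orthogonal in the Gram pairing. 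The heart of the argument is to show that smoothness of $p$ on $\partial\Sigma_{n,m}$ forces $M(y^*)$ to have rank exactly one.

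\medskip

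The geometric mechanism I would use is the following. A smooth boundary point $p$ of a convex cone lies on a unique supporting hyperplane; dually, the normal cone to $\Sigma_{n,m}$ at $p$ is a ray, spanned by a single linear functional $\ell$. But every moment functional $y$ feasible for \reff{maxf:sdpr} and satisfying $\langle p, y\rangle = 0$ (the complementary slackness condition $\langle \gamma^* g - f, y\rangle = \gamma^* - f_{\max}^{\mathrm{sdp}} = 0$) supports $\Sigma_{n,m}$ at $p$, because $\langle q, y\rangle \geq 0$ for all $q \in \Sigma_{n,m}$ (as $y$ comes from a genuine moment matrix $M(y)\succeq 0$) with equality at $q = p$. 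Smoothness therefore pins down the supporting functional up to scaling: every optimizer $y^*$ must be a scalar multiple of the unique extreme moment functional in the normal ray. The remaining task is to identify that unique functional with a point evaluation $[(v^+)^m]$, which is precisely a rank-one moment matrix; this follows because the unit sphere attains $f_{\max}$ (so some genuine evaluation functional at a maximizer is feasible and lies on the face), and an evaluation functional gives $M(y^*) = [(v^+)^d][(v^+)^d]^T$, of rank one.

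\medskip

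\textbf{The main obstacle} I anticipate is the passage from ``the supporting hyperplane at $p$ is unique'' to ``$y^*$ is a rank-one moment functional.'' Uniqueness of the supporting functional is a statement in the dual space of all linear functionals on forms, whereas $y^*$ ranges only over those normalized by $\langle g, y\rangle = 1$ with $M(y)\succeq 0$; I must verify that the cone of feasible moment functionals supporting $\Sigma_{n,m}$ at $p$ is exactly the one-dimensional ray cut out by smoothness, and that its normalized generator is a point evaluation rather than a nontrivial positive combination. Concretely, if $M(y^*)$ had rank $r > 1$, I would need to produce a second, linearly independent supporting functional at $p$ — for instance by a flat-extension or Curto--Fialkow type argument showing that a higher-rank $M(y^*)$ decomposes into several atomic measures, each of whose evaluation functionals separately supports $\Sigma_{n,m}$ at $p$, contradicting smoothness. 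Making this decomposition-to-contradiction step rigorous, and confirming that the relevant normal directions are genuinely distinct in the ambient form space, is where the real work lies.
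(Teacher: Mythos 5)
Your plan is essentially the paper's proof: both $y^*$ and the evaluation functional $\hat y = [u^m]$ at a maximizer $u$ of $f$ on $\mathbb{S}^{n-1}$ define supporting hyperplanes of $\Sigma_{n,m}$ through $\sigma = f_{\max}\cdot g - f$ (the former by complementary slackness, the latter since $\langle p,\hat y\rangle = p(u)\ge 0$ for $p\in\Sigma_{n,m}$ and $\sigma(u)=0$), so smoothness forces $y^* = \hat y$ after the normalization $\langle g,\cdot\rangle = 1$, whence $M(y^*) = [u^d][u^d]^T$ has rank one. The obstacle you anticipate does not arise: no Curto--Fialkow or flat-extension decomposition of a hypothetical higher-rank $M(y^*)$ is needed, because one compares $y^*$ directly with the point evaluation at a genuine maximizer (which exists by compactness) rather than extracting atoms from $y^*$.
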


\begin{proof}
(i) Let $\mu$ be the uniform probability measure on the unit sphere
$\mathbb{S}^{n-1}$, and let $y^{\mu} := \int [x^m] d\mu \in
\re^{\N_m^n}$. Then, we can show that $M(y^{\mu}) \succ 0$ and
$\langle g, y^{\mu} \rangle = 1$. This shows that $y^{\mu}$ is an
interior point of \reff{maxf:sdpr}. So, the strong duality holds,
that is, the optimal values of \reff{maxf:sdpr} and \reff{maxf:sosr}
are equal, and \reff{maxf:sosr} achieves the optimal value
$f_{\max}^{\text{sdp}}$. The form $\sig: = f_{\max} \cdot g - f$
belongs to $\Sig_{n,m}$, because $f_{\max} = f_{\max}^{\text{sdp}}$.
Let $u$ be a maximizer of $f$ on $\mathbb{S}^{n-1}$. Then,
$\sig(u)=0$. So, $\sig$ lies on the boundary $\pt\Sig_{n,m}$. Let
$\hat{y} = [u^m]$. Denote by $\re[x]_m$ the space of all forms in
$x$ and of degree $m$. Then
\[
\mc{H}_{\hat{y}} := \{ p \in \re[x]_m: \, \langle p, \hat{y} \rangle
= 0 \}
\]
is a supporting hyperplane of $\Sig_{n,m}$ through $\sig$, because
\[
\langle p, \hat{y} \rangle = p(u) \geq 0, \quad \forall \, p \in
\Sig_{n,m}
\]
and $\langle \sig, \hat{y} \rangle = \sig(u) = 0$. Because
$M(y^*)\succeq 0$ and $\langle p, y^* \rangle \geq 0$ for all $p \in
\Sig_{n,m}$, the hyperplane
\[
\mc{H}_{y^*} := \{ p \in \re[x]_m: \, \langle p, y^* \rangle = 0 \}
\]
also supports $\Sig_{n,m}$ through $\sig$. Since $\sig$ is a smooth
point of $\pt\Sig_{n,m}$, there is a unique supporting hyperplane
$\mc{H}$ through $\sig$. So, $y^* = \hat{y} = [u^m]$. This implies
that $M(y^*) = M( [ u^m ] ) = [u^d ] [u^d]^T$ has rank one, where
$d=m/2$.

(ii) The proof is the same as for (i).
\end{proof}

By Theorem~\ref{thm:symeven:rk=1}, when semidefinite relaxations
\reff{maxf:sdpr} and \reff{minf:sdpr} are tight, we often have
$\rank\, M(y^*)=\rank\,M(z^*)=1$. This fact was observed in our
numerical experiments. The reason is that the set of nonsmooth
points of the boundary $\pt\Sig_{n,m}$ has a strictly smaller
dimension than $\pt\Sig_{n,m}$.

\subsection{Symmetric tensors of odd orders}
\label{section:odd:opt}

Let $\mF \in \mt{S}^m(\re^n)$ with $m=2d-1$ odd. To get a best
rank-$1$ approximation of $\mF$, we need to solve the optimization
problem \reff{max|f(x)|:sym}. Since the form $f$, defined as in
\reff{hmf:sym}, has the odd degree $m$, \reff{max|f(x)|:sym} is
equivalent to \reff{maxf:||x||=1}. We can not directly apply a
semidefinite relaxation to solve \reff{maxf:||x||=1}. For this
purpose, we use a trick that is introduced in
\cite[Section~4.2]{Nieform2012}.

Let $f_{\max}, f_{\min}$ be as in \reff{maxf:||x||=1},
\reff{minf:||x||=1} respectively. Since $f$ is an odd form,
\[
f_{\max} = -f_{\min} \geq 0.
\]
Let $x_{n+1}$ be a new variable, in addition to
$x=(x_1,\ldots,x_n)$. Let
\[
\tilde{x} := (x_1,\ldots,x_n, x_{n+1}), \quad \tilde{f}(\tilde{x})
:= f(x) x_{n+1}.
\]
Then $\tilde{f}(\tilde{x})$ is a form of even degree $2d$. Consider
the optimization problem: \be \label{maxf:odd->even}
\tilde{f}_{\max} : =\max\limits_{ \tilde{x} \in \mR^{n+1}} \quad
\tilde{f}(\tilde{x}) \quad \text{s.t.} \quad \|\tilde{x} \| = 1. \ee
As shown in \cite[Section~4.2]{Nieform2012}, it holds that
\[
f_{\max} = \sqrt{2d-1}\big(1-\frac{1}{2d}\big)^{-d}
\tilde{f}_{\max}.
\]
Since $\tilde{f}$ is an even form, the semidefinite relaxation for
\reff{maxf:odd->even} is \be  \label{oddf:sdpr}
\tilde{f}_{\max}^{\text{sdp}} := \max\limits_{ y \in \N^{n+1}_{2d} }
\quad \langle \tilde{f}, y \rangle  \quad
 \text{s.t.} \quad M(y) \succeq 0, \quad \langle g, y \rangle = 1.
\ee The vector $y$ is indexed by $(n+1)$-dimensional integer
vectors.

Let $y^*$ be a maximizer of \reff{oddf:sdpr}, which always exists
because the feasible set of \reff{oddf:sdpr} is compact. If
$\rank\,M(y^*) =1$, then \reff{oddf:sdpr} is a tight relaxation, and
a global maximizer $v^+$ of \reff{maxf:odd->even} can be chosen as
in \reff{cho:v+}. (Note that the $n$ in \reff{cho:v+} should be
replaced by $n+1$.) Write $v^+$ as
\[
v^+ = ( \hat{v}, \hat{t} ), \quad \|\hat{v} \|^2 + \hat{t}^2 = 1.
\]
If $\hat{v} =0$ or $\hat{t} = 0$, then $\tilde{f}_{\max}=f_{\max}=0$
and the zero tensor is the best rank-$1$ approximation of $\mF$. So,
we consider the general case $0< |\hat{t}| < 1$. Note that
$\mbox{sign}(\hat{t}) \cdot \hat{v}$ is a global maximizer of $f$ on
the sphere $\|\hat{v}\|_2^2 = 1 - \hat{t}^2$.  Let \be
\label{cho:u:odd} \hat{u} = \mbox{sign}(\hat{t}) \cdot \hat{v} /
\sqrt{ 1 - \hat{t}^2 }. \ee Then $\hat{u}$ is a global maximizer of
$f$ on $\mathbb{S}^{n-1}$. When $\rank\,M(y^*) > 1$, the above
$\hat{u}$ might not be a global maximizer, but it can be used as an
approximation for a maximizer.

Combining the above, we get the following algorithm.

\begin{alg}  \label{alg:sym:odd}
Rank-$1$ approximations for odd symmetric tensors.
\\
\noindent {\bf Input:}\, A symmetric tensor $\mF \in
\mt{S}^m(\re^n)$ with an odd order $m=2d-1$.

\noindent {\bf Output:}\,  A rank-$1$ symmetric tensor $\lmd \cdot
u^{\otimes m}$ with $\lmd \in \re$ and $u \in \mathbb{S}^{n-1}$.

\noindent {\bf Procedure:} \quad \bdes

\item [Step 1] Solve the semidefinite relaxation \reff{oddf:sdpr}
and get an optimizer $y^*$.

\item [Step 2] Choose $v^+$ as in \reff{cho:v+}
and $\hat{u}$ as in \reff{cho:u:odd}. (The $n$ in \reff{cho:v+}
should be replaced by $n+1$.)

\item [Step 3] If $\rank\,M(y^*) = 1$, let $u = \hat{u}$;
otherwise, apply a nonlinear optimization method to get a better
solution $u$ of \reff{maxf:||x||=1}, by using $\hat{u}$ as a
starting point. Let $\lmd = f(u)$. Output $(\lmd, u)$.

\edes

\end{alg}

\noindent {\it Remark:} In Algorithm~\ref{alg:sym:odd}, if
$\rank\,M(y^*) = 1$, then the output $\lmd \cdot u^{\otimes m}$ is a
best rank-$1$ approximation of the tensor $\mF$. If $\rank\,M(y^*) >
1$, then $\lmd \cdot u^{\otimes m}$ is not necessarily the best. The
approximation quality of the semidefinite relaxation
\reff{oddf:sdpr} is analyzed in \cite[Section~4]{Nieform2012}. In
our numerical experiments, we often have $\rank\,M(y^*) = 1$. A
similar version of Theorem~\ref{thm:symeven:rk=1} is true for
Algorithm~\ref{alg:sym:odd}. We omit it for cleanness of the paper.

\subsection{Nonsymmetric tensors}

Let $\mF \in \re^{n_1 \times \cdots \times n_m}$ be a nonsymmetric
tensor of order $m$. Let $F(x^{1}, \ldots, x^{m})$ be the
multi-linear form defined as in \reff{df:F(x):nsy}. To get a best
rank-$1$ approximation of $\mF$ is equivalent to solving the
multilinear optimization problem \reff{maxF:sph:nsym}. Here we show
how to solve it by using semidefinite relaxations.

Without loss of generality, assume $n_m = \max_j n_j$. Since
$F(x^{1}, \ldots, x^{m})$ is linear in $x^m$, we can write it as
\[
F(x^{1}, \ldots, x^{m}) = \sum_{j=1}^{n_m} (x^m)_j F_j(x^{1},
\ldots, x^{m-1}),
\]
where each $F_j(x^{1}, \ldots, x^{m-1})$ is also a multi-linear
form. Let $m^{\prm} = m-1$, and
\[
F^{sq} := \sum_{j=1}^{n_m}  F_j(x^{1}, \ldots, x^{m^\prm})^2.
\]
By the Cauchy-Schwartz inequality, it holds that
\[
|F(x^{1}, \ldots, x^{m})| \leq F^{sq}(x^{1}, \ldots,
x^{m^\prm})^{1/2} \| x^m \| .
\]
The equality occurs in the above if and only if $x^m$ is
proportional to
\[
\big(F_1(x^{1}, \ldots, x^{m^\prm}), \ldots, F_{n_m}(x^{1}, \ldots,
x^{m^\prm}) \big).
\]
Therefore, \reff{maxF:sph:nsym} is equivalent to \be
\label{maxG:nsy:m-1} \left\{ \baray{ccc} F_{\max} :&=
\underset{x^{1}, \ldots, x^{m^\prm} }{\max} &
F^{sq}(x^{1}, \ldots, x^{m^\prm}) \\
& \text{s.t.} &   \|x^{1}\|  = \cdots  =\|x^{m^\prm}\| =1. \earay
\right. \ee

Now we present the semidefinite relaxations for solving
\reff{maxG:nsy:m-1}. The outer product
\[
\mc{K}(x) := x^{1} \otimes \cdots \otimes x^{m^\prm}
\]
is a vector of length $n_1\cdots n_{m^\prm}$.  Denote
\[
\Omega := \{  (\imath, \jmath): \, \imath, \jmath \in [n_1] \times
\cdots \times [n_{m^\prm}] \}.
\]
Expand the outer product of $\mc{K}(x)$ as
\[
\mc{K}(x) \mc{K}(x)^T =   \sum_{ (\imath, \jmath) \in \Omega }
B_{\imath, \jmath }  \, (x^1)_{\imath_1}(x^1)_{\jmath_1} \cdots
(x^{m^\prm})_{\imath_{m^\prm}}(x^{m^\prm})_{\jmath_{m^\prm}},
\]
where each $B_{\imath, \jmath }$ is a constant symmetric matrix. For
$w \in \re^{\Omega}$, define
\[
K(w)  :=   \sum_{ (\imath, \jmath) \in \Omega } B_{\imath, \jmath }
w_{\imath,\jmath}.
\]
Clearly, $K(w)$ is a linear pencil in $w \in \re^{\Omega}$. Write
\[
F^{sq} =   \sum_{ (\imath, \jmath) \in \Omega } G_{\imath, \jmath }
\, (x^1)_{\imath_1}(x^1)_{\jmath_1} \cdots
(x^{m^\prm})_{\imath_{m^\prm}} (x^{m^\prm})_{\jmath_{m^\prm}},
\]
\[
h := \| x^1 \|^2 \cdots \| x^{m^\prm} \|^2 =   \sum_{ (\imath,
\jmath) \in \Omega } h_{\imath, \jmath }  \,
(x^1)_{\imath_1}(x^1)_{\jmath_1} \cdots
(x^{m^\prm})_{\imath_{m^\prm}} (x^{m^\prm})_{\jmath_{m^\prm}}.
\]
For $w \in \re^{\Omega}$, we denote
\[
\langle F^{sq}, w \rangle :=   \sum_{ (\imath, \jmath) \in \Omega }
G_{\imath, \jmath }  \, w_{\imath, \jmath }, \qquad \langle h, w
\rangle :=   \sum_{ (\imath, \jmath) \in \Omega } h_{\imath, \jmath
}  \, w_{\imath, \jmath }.
\]

A semidefinite relaxation of \reff{maxG:nsy:m-1} is \be
\label{max:qG:sdpr} F_{\max}^{\text{sdp}} := \max  \quad \langle
F^{sq}, w \rangle  \quad
 \text{s.t.} \quad K(w) \succeq 0, \quad \langle h, w \rangle = 1.
\ee Define $\Sig_{n_1,\ldots,n_{m^\prm}}$ to be the cone as
\[
\Sig_{n_1,\ldots,n_{m^\prm} } = \left\{ L \left|  \baray{c}
L= L_1^2+\cdots + L_k^2 \mbox{ where each } L_i  \\
 \mbox{is a multilinear form in } (x^1,\ldots, x^{m^\prm})
\earay \right. \right\}.
\]
It can be shown that the dual problem of \reff{max:qG:sdpr} is \be
\min \quad \gamma \quad \text{s.t.} \quad \gamma h - F^{sq} \in
\Sig_{n_1,\ldots,n_{m^\prm} }. \ee Clearly, we always have
$F_{\max}^{\text{sdp}} \geq F_{\max}$. When the equality occurs, we
say that \reff{max:qG:sdpr} is a tight relaxation.

The feasible set of \reff{max:qG:sdpr} is compact, because
\[
\mbox{Trace} (K(w)) = \langle h, w \rangle = 1.
\]
Let $w^*$ be a maximizer of \reff{max:qG:sdpr}. Like for the case of
symmetric tensors, if $\rank\,K(w^*) =1$, then \reff{max:qG:sdpr} is
tight, and there exist vectors $v^1,\ldots,v^{m^\prm}$ of unit
length such that $w^*  = (v^1 (v^1)^T) \otimes \cdots \otimes
(v^{m^\prm}(v^{m^\prm})^T)$ and $(v^1,\ldots,v^{m^\prm})$ is a
maximizer of \reff{maxG:nsy:m-1}. They can be constructed as
follows. Let $\ell \in [n_1] \times \cdots \times [n_{m^\prm}]$ be
the index such that
\[
w^*_{\ell,\ell}  = \max_{ (\imath, \imath) \in \Omega} w^*_{\imath,
\imath}.
\]
Then choose $v^j$ ($j=1,\ldots,m^\prm$) as: \be \label{cho:v1:m-1}
\hat{v}^j = \Big(w^*_{ \hat{\ell}_1,\ell}, w^*_{\hat{\ell}_2,\ell},
\ldots, w^*_{\hat{\ell}_{n_j},\ell}\Big), \qquad v^j = \hat{v}^j /
\| \hat{v}^j \|, \ee where $\hat{\ell}_k= \ell +(k-\ell_j)\cdot e_j$
for each $k\in[n_j]$. When $\rank\,K(w^*)>1$, the tuple
$(v^1,\ldots,v^{m^\prm})$ as in \reff{cho:v1:m-1} might not be a
global maximizer of \reff{maxG:nsy:m-1}. But it can be used as an
approximation for a maximizer of \reff{maxG:nsy:m-1}.

Combining the above, we get the following algorithm.

\begin{alg}  \label{alg:nonsym}
Rank-$1$ approximations for nonsymmetric tensors.
\\
\noindent {\bf Input:}\, A nonsymmetric tensor $\mF \in
\re^{n_1\times   \ldots \times n_m}$.

\noindent {\bf Output:}\,  A rank-$1$ tensor $ \lmd \cdot (u^1
\otimes \cdots \otimes u^m)$ with $\lmd \in \re$ and each $u^j \in
\mathbb{S}^{n_j-1}$.

\noindent {\bf Procedure:} \quad

\bdes

\item [Step 1] Solve the semidefinite relaxation \reff{max:qG:sdpr}
and get a maximizer $w^*$.

\item [Step 2] Choose $(v^1,\ldots,v^{m^\prm})$ as in \reff{cho:v1:m-1}.
Then let
\[
\hat{v}^m := (F_1(v^1,\ldots,v^{m^\prm}), \ldots,
F_{n_m}(v^1,\ldots,v^{m^\prm})),
\]
and $v^m := \hat{v}^m / \|\hat{v}^m\|$.

\item [Step 3] If $\rank \, K(w^*)=1$,
let $u^i = v^i$ for $i=1,\ldots,m$; otherwise, apply a nonlinear
optimization method to get a better solution $(u^1,\ldots,u^m)$ of
\reff{maxF:sph:nsym}, by using $(v^1,\ldots,v^m)$ as a starting
point.

\item [Step 4] Let $\lmd = F(u^1,\ldots, u^m)$, and output
$(\lmd, u^1,\ldots, u^m)$.

\edes

\end{alg}

\noindent {\it Remark:} In Algorithm~\ref{alg:nonsym}, if
$\rank\,K(w^*) = 1$, then the output $\lmd \cdot u^1 \otimes \cdots
\otimes u^m$ is a best rank-$1$ approximation of $\mF$. If
$\rank\,K(w^*) > 1$, then $\lmd \cdot u^1 \otimes \cdots \otimes
u^m$ is not necessarily the best. The approximation quality of the
semidefinite relaxation \reff{max:qG:sdpr} is analyzed in
\cite[Section~3]{Nieform2012}.

We want to know when $\rank \, K(w^*) = 1$. Clearly, for this to be
true, the relaxation \reff{max:qG:sdpr} must be tight, i.e.,
$F_{\max}= F_{\max}^{\text{sdp}}$. Like for the symmetric case, the
reverse is also often true, as shown in the following theorem. Let
$\pt\Sig_{n_1,\ldots,n_{m^\prm}}$ be the boundary of the cone
$\Sig_{n_1,\ldots,n_{m^\prm}}$.

\begin{theorem}
Let $F_{\max},F_{\max}^{\emph{sdp}}, w^*$ be as above. Suppose
$F_{\max} = F_{\max}^{\emph{sdp}}$. If $F_{\max} \cdot h - F^{sq}$
is a smooth point of $\pt\Sig_{n_1,\ldots,n_{m^\prm}}$, then $\rank
\, K(w^*) = 1$.
\end{theorem}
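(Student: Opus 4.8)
The plan is to mirror the proof of Theorem~\ref{thm:symeven:rk=1}(i) exactly, since the nonsymmetric setting is structurally identical: we have a compact feasible set, a convex cone $\Sig_{n_1,\ldots,n_{m^\prm}}$ with a smooth boundary point, and a pair of dual SDPs. The key idea is that a smooth boundary point of a convex cone has a \emph{unique} supporting hyperplane, and both the primal optimizer $w^*$ and the ``true'' moment vector coming from a maximizer must induce supporting hyperplanes through the same boundary form, forcing them to coincide.

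First I would establish strong duality for the pair \reff{max:qG:sdpr} and its dual, by exhibiting a strictly feasible (interior) point of \reff{max:qG:sdpr}. The natural candidate is the moment vector of a product of uniform measures: let $\mu$ be the product of the uniform probability measures on the spheres $\mathbb{S}^{n_1-1},\ldots,\mathbb{S}^{n_{m^\prm}-1}$, and set $w^{\mu} := \int \mc{K}(x)\mc{K}(x)^T\,d\mu$ suitably arranged as an element of $\re^{\Omega}$. One checks $K(w^{\mu}) \succ 0$ (positive definiteness follows since $\mc{K}(x)$ spans as $x$ ranges over the product of spheres) and $\langle h, w^{\mu}\rangle = 1$ after normalization. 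Hence Slater's condition holds, the optimal values of \reff{max:qG:sdpr} and its dual agree, and the dual value $F_{\max}^{\text{sdp}}$ is attained. Under the hypothesis $F_{\max}=F_{\max}^{\text{sdp}}$, this means the form $\sig := F_{\max}\cdot h - F^{sq}$ lies in $\Sig_{n_1,\ldots,n_{m^\prm}}$.

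Next I would show $\sig$ sits on the boundary $\pt\Sig_{n_1,\ldots,n_{m^\prm}}$ and produce two supporting hyperplanes through it. Let $(u^1,\ldots,u^{m^\prm})$ be a maximizer of \reff{maxG:nsy:m-1} and set $\hat{w} := (u^1(u^1)^T)\otimes\cdots\otimes(u^{m^\prm}(u^{m^\prm})^T)$, viewed in $\re^{\Omega}$; then evaluation gives $\langle L, \hat{w}\rangle = L(u^1,\ldots,u^{m^\prm}) \geq 0$ for every $L \in \Sig_{n_1,\ldots,n_{m^\prm}}$, while $\langle \sig, \hat{w}\rangle = \sig(u^1,\ldots,u^{m^\prm}) = 0$ by optimality. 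Thus $\mc{H}_{\hat{w}} := \{L : \langle L, \hat{w}\rangle = 0\}$ is a supporting hyperplane of the cone through $\sig$, and $\sig \in \pt\Sig_{n_1,\ldots,n_{m^\prm}}$. Likewise, since $K(w^*)\succeq 0$ gives $\langle L, w^*\rangle \geq 0$ for all $L$ in the cone and $\langle \sig, w^*\rangle = 0$ by complementary slackness at optimality, the hyperplane $\mc{H}_{w^*} := \{L : \langle L, w^*\rangle = 0\}$ also supports the cone through $\sig$. Smoothness of $\sig$ forces $\mc{H}_{\hat{w}} = \mc{H}_{w^*}$, and after matching the normalization $\langle h, \cdot\rangle = 1$ one concludes $w^* = \hat{w}$, whence $K(w^*) = K(\hat{w})$ has rank one.

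The main obstacle I expect is the last identification step: uniqueness of the supporting hyperplane only pins down $w^*$ and $\hat{w}$ up to a positive scalar, so I must argue that the shared normalization $\langle h, w^*\rangle = \langle h, \hat{w}\rangle = 1$ removes this ambiguity, and more delicately that a functional vanishing on the correct hyperplane is represented by a \emph{unique} element of $\re^{\Omega}$ (i.e.\ that the map $w \mapsto \mc{H}_w$ is injective up to scaling on the relevant subspace). This requires knowing that the span of $\{\mc{K}(x)\mc{K}(x)^T\}$ is all of the ambient symmetric-matrix space, so that pairing against forms separates points of $\re^{\Omega}$; this is where the product structure of $\mc{K}(x)$ and the nondegeneracy used for $K(w^\mu)\succ 0$ are reused. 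Once injectivity is in hand the rank-one conclusion is immediate from the factored form of $\hat{w}$, exactly as in the even symmetric case, and the remainder is routine.
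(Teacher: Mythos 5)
Your proposal follows essentially the same route as the paper, which itself reduces this theorem to the argument of the even symmetric case: strong duality via an interior moment vector, placing $F_{\max}\cdot h - F^{sq}$ on the boundary of the cone, and using the uniqueness of the supporting hyperplane at a smooth boundary point to identify $w^*$ with the rank-one vector $\hat w$ built from a maximizer. Your added care about the normalization $\langle h,\cdot\rangle = 1$ and the injectivity of $w \mapsto \langle \cdot, w\rangle$ is a reasonable refinement of a step the paper leaves implicit, but the argument is the same.
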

\begin{proof}
This can be proved in the same way as for
Theorem~\ref{thm:symeven:rk=1}. Let $(u^1, \cdots ,u^{m^\prm})$ be a
global maximizer of \reff{maxG:nsy:m-1}. Let $\hat{w} \in
\re^{\Omega}$ be the vector such that
\[
\hat{w}_{\imath, \jmath} = (u^1)_{\imath_1}(u^1)_{\jmath_1} \cdots
(u^m)_{\imath_m}(u^m)_{\jmath_m} \quad \forall \, (\imath,\jmath)
\in \Omega.
\]
The key point is the observation that $\langle p, \hat{w} \rangle =
0$ defines a unique supporting hyperplane of $\Sig_{n_1,\ldots,
n_{m^\prm}}$ through $F_{\max} \cdot h - F^{sq}$, when it is a
smooth point of the boundary $\pt\Sig_{n_1,\ldots, n_{m^\prm}}$. The
proof proceeds same as for Theorem~\ref{thm:symeven:rk=1}.
\end{proof}

\section{Numerical Experiments}\label{sec:exmples}
\setcounter{equation}{0}

In this section, we present numerical experiments of using
semidefinite relaxations to find best rank-$1$ tensor
approximations. The computations are implemented in Matlab 7.10 on a
Dell Linux Desktop with 8GB memory and Intel(R) CPU 2.8GHz. In
applications, the resulting semidefinite programs are often large
scale. Interior point methods are not very suitable for solving such
big semidefinite programs. We use the software {\tt SDPNAL}
\cite{sdpnal} by Zhao, Sun and Toh, which is based on the Newton-CG
augmented Lagrangian method \cite{ZST10}. In our computations, the
default values of the parameters in {\tt SDPNAL} are used. In
Algorithms~\ref{alg:sym:even}, \ref{alg:sym:odd} and
\ref{alg:nonsym}, if the matrices $M(y^*),M(z^*),K(w^*)$ do not have
rank one, we apply the nonlinear program solver {\tt fmincon} in
Matlab Optimization Toolbox to improve the rank-$1$ approximations
obtained from semidefinite relaxations. Our numerical experiments
show that these algorithms are often able to get best rank-$1$
approximations and {\tt SDPNAL} is efficient in solving such large
scale semidefinite relaxations.

We report the consumed computer time in the format {\tt hr:mn:sc}
with {\tt hr} (resp., {\tt mn, sc}) standing for the consumed hours
(resp., minutes, seconds). In our presented tables, {\tt min}
(resp., {\tt med, \tt max}) stands for the minimum (resp., median,
maximum) of quantities like time, errors.

In our computations, the rank of a matrix $A$ is numerically
measured as follows: if the singular values of $A$ are $\sig_1 \geq
\sig_2 \geq \cdots \geq \sig_t>0$, then $\rank\,(A)$ is set to be
the smallest $r$ such that $\sig_{r+1}/\sig_r < 10^{-6}$. In the
display of our computational results, only four decimal digits are
shown.

\subsection{Symmetric tensor examples}

In this subsection, we report numerical experiments for symmetric
tensors. We apply Algorithm~\ref{alg:sym:even} for even symmetric
tensors, and apply Algorithm~\ref{alg:sym:odd} for odd symmetric
tensors. In Algorithm~\ref{alg:sym:even}, if \be
\label{rank:condition:sym:even} \rank\, M(y^*)= \rank\, M(z^*)=1,
\ee then the output tensor $\lmd\cdot u^{\otimes m}$ is a best
rank-$1$ approximation. If $\rank\, M(y^*)>1$ or $\rank\, M(z^*)>1$,
$\lmd\cdot u^{\otimes m}$ is not guaranteed to be a best rank-$1$
approximation. However, the quantity
\[
f_{\text{ubd}} :=\max\{|f_{\max}^{\text{sdp}}|,
|f_{\min}^{\text{sdp}}|\}
\]
is always an upper bound of $|f(x)|$ on $\mathbb{S}^{n-1}$. No
matter whether \reff{rank:condition:sym:even} holds or not, the
error \be \label{aprox:ratio} \text{\tt aprxerr} := \big| |f(u)| -
f_{\text{ubd}} \big|/ \max\{1, f_{\text{ubd}} \} \ee is a measure of
the approximation quality of $\lmd\cdot u^{\otimes m}$. When
Algorithm~\ref{alg:sym:odd} is applied for odd symmetric tensors,
$f_{\text{ubd}}$ and ${\tt aprxerr}$ are defined similarly. As in
Qi~\cite{Qi2011rank1}, we define the {\it best rank-$1$
approximation ratio} of a tensor $\mF \in \mt{S}^m(\re^n)$ as \be
\label{bst:apx:ro} \rho(\mF) := \max\limits_{ \mc{X} \in
\mt{S}^m(\re^n), \rank\, \mc{X} = 1 } \frac{|\langle
\mF,\mc{X}\rangle|}{\|\mF\|\|\mc{X}\|}. \ee If $\lmd \cdot
u^{\otimes m}$, with $\|u\|=1$ and $\lmd = f(u)$, is a best rank-$1$
approximation of $\mF$, then $ \rho(\mF) =  | \lmd | / \|\mF\|. $
Estimates for $\rho(\mF)$ are given in Qi~\cite{Qi2011rank1}.

\medskip

\begin{exm} (\cite[Example 2]{LMV00b})
Consider the tensor $\mF \in \mt{S}^3(\re^2)$ with entries: \be \nn
\begin{small}
\begin{aligned}\label{data:exm::sym:dim2}
& \mF_{111} =1.5578, \quad \mF_{222} = 1.1226,\quad \mF_{112} =
-2.4443,\quad \mF_{221} = -1.0982.
\end{aligned}
\end{small}
\ee When Algorithm~\ref{alg:sym:odd} is applied, we get the rank-$1$
tensor $\lambda \cdot u^{\otimes 3}$ with
\[
\lambda =  3.1155 ,\quad u = (0.9264,-0.3764).
\]
It takes about 0.2 second. The computed matrix $M(y^*)$ has rank
one. So, we know $\lambda \cdot u^{\otimes 3}$ is a best rank-$1$
approximation. The error {\tt aprxerr} = 7.3e-9, the ratio
$\rho(\mF)=0.6203$, the residual $\| \mF - \lmd \cdot u^{\otimes 3}
\| = 3.9399$, and $\|\mF\| = 5.0228$.

\end{exm}

\begin{exm}\label{exm:sym:2}
(\cite[Example~3.6]{shifted2011}, \cite[Example 4.2]{ZLQ12})
Consider the tensor $\mathcal{F} \in \mt{S}^3(\re^3) $ with entries:
\be \nn
\begin{small}
\begin{aligned}\label{data:exm:2}
& \mF_{111} =-0.1281, \mF_{112} = 0.0516, \mF_{113} =-0.0954,
\mF_{122} = -0.1958,\mF_{123} = -0.1790,\\
&   \mF_{133} = -0.2676, \mF_{222} = 0.3251, \mF_{223} =\ \ 0.2513,
\mF_{233} = \quad 0.1773, \mF_{333} =\ \ 0.0338.
\end{aligned}
\end{small}
\ee When Algorithm~\ref{alg:sym:odd} is applied, we get the rank-$1$
tensor $\lambda \cdot u^{\otimes 3}$ with
\[
\lambda =  0.8730, \quad u = ( -0.3921, 0.7249,0.5664).
\]
It takes about 0.2 second. The computed matrix $M(y^*)$ has rank
one, so $\lambda \cdot u^{\otimes 3}$ is a best rank-$1$
approximation. The error {\tt aprxerr}=1.2e-7, the ratio
$\rho(\mF)=0.8890$, the residual $\| \mF - \lmd \cdot u^{\otimes 3}
\| = 0.4498 $ and $\|\mF\| = 0.9820$.

\end{exm}

\begin{exm} (\cite[Example 2]{Qi2011rank1})
Consider the tensor $\mF \in \mt{S}^3(\re^3)$ with entries: \be \nn
\begin{aligned}\label{data:exm:deg3:1}
& \mF_{111} =0.0517 , \mF_{112} = 0.3579 , \mF_{113} = 0.5298,
\mF_{122} = 0.7544,\mF_{123} = 0.2156 ,\\
&   \mF_{133} = 0.3612 , \mF_{222} = 0.3943, \mF_{223} = 0.0146 ,
\mF_{233} = 0.6718, \mF_{333} = 0.9723.
\end{aligned}
\ee 
When Algorithm~\ref{alg:sym:odd} is applied, we get the rank-$1$
tensor $\lambda \cdot u^{\otimes 3}$ with
\[
\lambda = 2.1110,\quad u = (0.5204, 0.5113, 0.6839).
\]
It takes about 0.2 second. Since the computed matrix $M(y^*)$ has
rank one, $\lambda \cdot u^{\otimes 3}$ is a best rank-$1$
approximation. The error {\tt aprxerr}=6.9e-8, the ratio
$\rho(\mF)=0.8574$, the residual $\| \mF - \lmd \cdot u^{\otimes 3}
\| = 1.2672$, and $\|\mF\| = 2.4621$.
\end{exm}

\begin{exm}
(\cite[Example~3.5]{shifted2011}, \cite[Example 4.1]{ZLQ12})
Consider the tensor $\mathcal{F}\in \mt{S}^{4}(\re^{3})$ with
entries:
\begin{equation*}
\begin{small}
\begin{aligned}\label{data:exm:deg4:sym}
  & \mF_{1111} = 0.2883, ~\mF_{1112} =-0.0031, \mF_{1113} =
0.1973, \mF_{1122} = -0.2485, \mF_{1123} = -0.2939, \\
&  \mF_{1133} = 0.3847,~ \mF_{1222} = \quad 0.2972,  \mF_{1223} = 0.1862, \mF_{1233} = ~~~ 0.0919,  \mF_{1333} = -0.3619, \\
 &\mF_{2222} = 0.1241,~\mF_{2223} = -0.3420, ~\mF_{2233} = 0.2127, ~ \mF_{2333} =
0.2727,  \mF_{3333} = -0.3054.
\end{aligned}
\end{small}
\end{equation*}
Applying Algorithm~\ref{alg:sym:even}, we get $\lmd^+ \cdot
(u^+)^{\otimes m}$ and $\lmd^- \cdot (u^-)^{\otimes m}$ with
\[
\lambda^+ = 0.8893, \quad u^+ = (-0.6672,-0.2470,0.7027),
\]
\[
\lambda^- = -1.0954,\quad u^- =  (-0.5915,0.7467,0.3043).
\]
It takes about 0.3 second. Since $|\lmd^+| < |\lmd^-|$, the output
rank-$1$ tensor is $\lambda \cdot u^{\otimes 4}$ with $ \lmd =
\lmd^-, u = u^-$. The computed matrices $M(y^*), M(z^*)$ both have
rank one, so $\lambda \cdot u^{\otimes 4}$ is a best rank-$1$
approximation. The error ${\tt aprxerr} $=2.8e-7, the ratio
$\rho(\mF)=0.4863$, the residual $\| \mF - \lmd \cdot u^{\otimes 4}
\| = 1.9683$, and $\| \mF\|= 2.2525$.
\end{exm}

\begin{exm} \label{sym:example:degree:cubic}
Consider the tensor $\mathcal{F} \in \mt{S}^3(\re^n)$ with entries:
\[
\begin{small}
(\mF)_{i_1,i_2,i_3} =
\frac{(-1)^{i_1}}{i_1} + \frac{(-1)^{i_2}}{i_2} +
\frac{(-1)^{i_3}}{i_3}.
\end{small}
\]
For $n=5$, we apply Algorithm~\ref{alg:sym:odd} and get the rank-$1$
tensor $\lmd \cdot u^{\otimes 3}$ with
\[
\lambda = 9.9779,  \qquad u = -(0.7313,0.1375,0.4674,0.2365,
0.4146).
\]
It takes about 0.3 second. The error {\tt aprxerr}= 1.4e-7. Since
the computed matrix $M(y^*)$ has rank one, we know $\lmd \cdot
u^{\otimes 3}$ is a best rank-$1$ approximation. The ratio
$\rho(\mF)= 0.8813$, the residual $\| \mF - \lmd  \cdot u^{\otimes
3} \| = 5.3498$ and $\|\mF\| = 11.3216$.

For a range of values of $n$ from $10$ to $55$, we apply
Algorithm~\ref{alg:sym:odd} to find rank-$1$ approximations. All the
computed matrices $M(y^*)$ have rank one, so we get best rank-$1$
approximations for all of them. The computational results are listed
in Table~\ref{sym:example3:results:deg3}. For $n=5,10,15,20,25,30$,
Algorithm~\ref{alg:sym:odd} takes less than one minute; for
$n=35,40,45$, it takes a couple of minutes; for $n=50,55$, it takes
about half an hour. The errors {\tt aprxerr} are all very tiny. The
best rank-$1$ approximation ratios are around three quarters.

\begin{table}\centering
\begin{scriptsize}
\begin{tabular}{|c|c|c|c|c||c|c|c|c|c|c|c|c|c|c|} \hline
$n$ &    time  & $\lambda $ & {\tt aprxerr} & $\rho(\mF_n) $ & $n$ &   time & $\lambda $ & {\tt aprxerr} &  $ \rho(\mF_n) $  \\
\hline  10    &  0:00:02&  17.80 & 3.5e-9 & 0.80 &15   & 0:00:03&
26.48 & 5.9e-8 &0.79
\\ \hline 20   &   0:00:06 & 34.16 & 9.8e-8&  0.77 &25    &
 0:00:11& 42.51 & 1.8e-7&0.77
\\ \hline 30    & 0:00:30 &  50.14 & 2.5e-8 &0.75
 & 35    &  0:01:02 & 58.33 & 5.1e-7 &0.75
\\ \hline 40    & 0:04:05 & 65.93 & 1.8e-8 & 0.74 & 45    & 0:12:07& 74.02 &2.5e-8&
0.74
\\ \hline 50    &  0:32:45 &  81.59  & 7.7e-8& 0.73 & 55    & 0:42:00 &  89.62& 9.2e-8&
0.73
 \\ \hline
\end{tabular}
\end{scriptsize}
 \caption{\small{Computational results for Example \ref{sym:example:degree:cubic}.} } \label{sym:example3:results:deg3}
\end{table}

\end{exm}

\begin{exm}  \label{sym:exm:deg4}
Consider the tensor $\mF \in \mt{S}^4(\re^n)$ given as
\[
\begin{small}
(\mF)_{i_1,\cdots,i_4} =
\arctan \Big( (-1)^{i_1} \frac{  i_1}{n} \Big) +\cdots + \arctan
\Big( (-1)^{i_4} \frac{i_4}{n} \Big).
\end{small}
\]
For $n=5$, applying Algorithm~\ref{alg:sym:even}, we get two
rank-$1$ tensors $\lmd^+ \cdot (u^+)^{\otimes m}$ and $\lmd^- \cdot
(u^-)^{\otimes m}$ with \be \nn
\begin{aligned}
\lmd^+  &= \ \ 13.0779, \quad u^+ =
(0.3174, 0.5881,0.1566,0.7260, 0.0418 ),\\
\lambda^- & = -23.5740,  \quad u^- = (0.4403,0.2382,
0.5602,0.1354,0.6459).
\end{aligned}
\ee It takes about 0.6 second. Since $|\lmd^+| < |\lmd^-|$, the
output rank-$1$ tensor is $\lambda \cdot u^{\otimes 4}$ with $\lmd
=-23.5740$ and $u=u^-$. The error ${\tt aprxerr}$=1.4e-7. The
computed matrices $M(y^*), M(z^*)$ both have rank one, so $\lambda
\cdot u^{\otimes 4}$ is a best rank-$1$ approximation. The ratio
$\rho(\mF)=0.8135$, the residual $\| \mF - \lmd  \cdot u^{\otimes 4}
\| = 16.8501$ and $\|\mF\|=28.9769$.

For a range of values of $n$ from $5$ to $60$, we apply
Algorithm~\ref{alg:sym:even} to find rank-$1$ approximations. The
computational results are listed in
Table~\ref{sym:exm:deg4:results}. All the computed matrices $M(y^*),
M(z^*)$ have rank one, so we get best rank-$1$ approximations for
all of them. For $n=5,10,15,20,25,30$, Algorithm~\ref{alg:sym:even}
takes less than one minute; for $n=35,40,45,50,55$, it takes less
than one hour, and for $n=60$, it takes around one hour.
The errors {\tt aprxerr} are all very tiny. The best rank-$1$
approximation ratios are near two thirds.
%

\begin{table}  \centering
\begin{scriptsize}
\begin{tabular}{|c|c|c|c|c||c|c|c|c|c|c|c|c|c|c|} \hline
$n$ &    time  & $\lambda$  & {\tt aprxerr} &  $\rho(\mF) $ & $n$ &   time & $\lambda$ & {\tt aprxerr} & $ \rho(\mF) $ \\
\hline 5   & 0:00:01 &  2.357e+1 &  1.4e-7  &   0.81 &  10 & 0:00:02
& 7.707e+1
 & 4.8e-9 & 0.73
\\ \hline 15   & 0:00:05 & 1.651e+2  &   2.3e-10  & 0.71  &  20 & 0:00:09 & 2.830e+2  &   2.1e-7 &0.69
\\ \hline 25   & 0:00:18 &  4.353e+2 &   5.5e-8 & 0.68  &  30 & 0:00:22 & 6.175e+2  &2.9e-9
&0.68
\\ \hline 35   & 0:01:37 &   8.342e+2 &  9.5e-10     & 0.67  &  40 & 0:04:41 &   1.081e+3 &   1.3e-7    & 0.67
\\ \hline 45   & 0:09:07 & 1.362e+3  &    3.7e-9  &   0.67 &  50 & 0:23:32 &  1.673e+3 & 1.7e-8
&0.67
\\ \hline 55   & 0:45:48  &  2.018e+3  &  3.1e-8    &   0.67  &  60 & 1:04:26 & 2.393e+3 & 1.0e-7 &0.66
\\ \hline
\end{tabular}
\end{scriptsize}
\caption{\small{Computational results for Example
\ref{sym:exm:deg4}. }} \label{sym:exm:deg4:results}
\end{table}

\end{exm}

\begin{exm}\label{sym:example:degree:5}
Consider the tensor $\mathcal{F} \in \mt{S}^5(\re^n) $ given as
\[
\begin{small}
(\mF)_{i_1,\cdots,i_5} =
 (-1)^{i_1}\ln(i_1) + \cdots + (-1)^{i_5}\ln(i_5).
\end{small}
\]
For $n=5$, we apply Algorithm~\ref{alg:sym:odd} and get the rank-$1$
tensor $\lmd \cdot u^{\otimes 5}$ with
\[
\lambda = 110.0083, \qquad u = -(0.3900,0.2785,0.5668,0.1669,0.6490
).
\]
It takes about 0.5 second. The computed matrix $M(y^*)$ has rank
one, so $\lmd \cdot u^{\otimes 5}$ is a best rank-$1$ approximation.
The error {\tt aprxerr}=3.0e-7. The ratio $\rho(\mF)=0.7709$, the
residual $\| \mF - \lmd \cdot u^{\otimes 5} \| = 90.8818$ and
$\|\mF\| = 142.6931$.

For a range of values of $n$ from $5$ to $20$, we apply
Algorithm~\ref{alg:sym:odd} to find rank-$1$ approximations. The
computational results are shown in
Table~\ref{sym:example:degree:5:results}. All the computed matrices
$M(y^*)$ have rank one, so we get best rank-$1$ approximations for
all of them. The errors {\tt aprxerr} are all tiny. The best
rank-$1$ approximation ratios $\rho(\mF)$ are close to $0.70$.
Algorithm~\ref{alg:sym:odd} takes from a few seconds to a couple of
minutes to get best rank-$1$ approximations.

\begin{table} \centering
\begin{scriptsize}
\begin{tabular}{|c|c|c|c|c||c|c|c|c|c|c|c|c|c|c|} \hline
$n$ &  time  & $\lambda$ & {\tt aprxerr} & $ \rho(\mF) $ & $n$ &  time & $\lambda$ & {\tt aprxerr} & $ \rho(\mF) $  \\
\hline 5   &  0:00:01 &  1.100e+2 &  1.2e-7 &  0.77   &13 &0:00:17 &
1.790e+3 &  1.2e-6&  0.69
\\ \hline
6    & 0:00:02 & 2.096e+2 &  2.6e-7 & 0.81 & 14    & 0:00:29 &
2.283e+3 & 1.1e-6 &  0.71
\\ \hline 7    & 0:00:03 &  2.975e+2 & 5.5e-8 & 0.74  &15 &  0:01:18 & 2.697e+3&  1.5e-8&
0.69
\\ \hline 8 & 0:00:06 & 4.706e+2 & 6.7e-8 & 0.77 &16     & 0:02:12 & 3.327e+3 &  8.2e-7 &
0.70
\\ \hline 9    &  0:00:10 & 6.192e+2& 1.4e-7 & 0.72 &  17  & 0:02:47 & 3.854e+3 & 4.9e-7& 0.68
\\ \hline 10    & 0:00:22& 8.833e+2& 1.8e-8 &0.74 &18     & 0:07:54 &   4.637e+3 &  1.4e-7 & 0.69
\\ \hline 11    &  0:00:23 &1.107e+3 & 2.4e-7 &0.70&19     & 0:12:46 &  5.289e+3 &7.2e-7 &
0.68
\\ \hline 12    &   0:00:13 & 1.478e+3& 3.8e-7&   0.72 &20     &  0:22:30 &  6.237e+3& 2.6e-7 &
0.69
\\ \hline
\end{tabular}
\end{scriptsize}
\caption{\small{Computational results for Example
\ref{sym:example:degree:5}.} } \label{sym:example:degree:5:results}
\end{table}

\end{exm}

\begin{exm}\label{sym:exm:Motzkin}
Consider the tensor $\mF \in \mt{S}^6(\re^3)$ with
\begin{equation*}
\begin{small}
\begin{aligned}
& \mF_{111111} = 2, \mF_{111122} = 1/3, \mF_{111133} =2/5, \mF_{1 1
2 2 2 2} =1/3
,\mF_{1 1 2 2 3 3} =1/6, \\
& \mF_{1 1 3 3 3 3} = 2/5, \mF_{2 2 2 2 2 2} = 2  , \mF_{2 2 2 2 3
3} = 2/5, \mF_{2 2 3 3 3 3} =2/5
,\mF_{3 3 3 3 3 3} =1 , \\
\end{aligned}
\end{small}
\end{equation*}
and $\mF_{i_1,\ldots,i_6}=0$ if $(i_1,\ldots, i_6)$ is not a
permutation of an index in the above. We can verify that
\[
f(x) = 2\|x\|^6 -M(x),
\]
where $M(x) = x_1^4x_2^2+x_1^2x_2^4+x_3^6-3x_1^2x_2^2x_3^2$ is the
Motzkin polynomial, which is nonnegative everywhere but not SOS (cf.
\cite{sosexm2000}). Since $0\leq M(x)\leq \|x\|^6$,
we can show that $f_{\max}=2$, $f_{\min}=1$. Applying
Algorithm~\ref{alg:sym:even}, we get
\[
f_{\max}^{\text{sdp}} = 2.0046, \quad v^{+} =(0,1,0) , \quad
f(v^{+})=2,
\]
\[
f_{\min}^{\text{sdp}} = 1.0000, \quad  v^{-}= (0,0,1), \quad
f(v^-)=1.
\]
The matrix $M(z^*)$ has rank one, so $\lmd^- = f(v^-)$ and $u^- =
v^-$. The matrix $M(y^*)$ has rank 7, which is bigger than one, so
we apply ${\tt fmincon}$ to improve $v^+$ but get the same point
$u^+=v^+$; let $\lmd^+ = f(u^+)$. Since $|\lmd^-| < |\lmd^+|$, the
output rank-$1$ tensor is $\lambda \cdot u^{\otimes 6}$ with
\[
\lambda  = 2.0000, \qquad u = (0,1,0 ).
 \]
Since $f_{\max}=\lmd = f(u)$, we know $\lambda \cdot u^{\otimes 6}$
is a best rank-$1$ approximation, by Theorem~\ref{thm:LLM00}. The
best rank-$1$ approximation ratio $\rho(\mF)=0.4046$.
%
%

\end{exm}

\begin{exm}
\label{random:exm:sym:1} (random examples) \, We explore the
performance of Algorithms~\ref{alg:sym:even} and \ref{alg:sym:odd}
on finding best rank-$1$ approximations for randomly generated
symmetric tensors. We generate $\mF \in \mt{S}^m(\re^n)$ with each
entry being a random variable obeying Gaussian distribution (by {\tt
randn} in Matlab). For each generated $\mF$, the semidefinite
relaxations \reff{maxf:sdpr}, \reff{minf:sdpr} and \reff{oddf:sdpr}
can be expressed in the standard dual form \be \label{stand:sdp}
%
%
\left\{ \baray{rl}
\max & b_1 \mu_1 + \cdots + b_M \mu_M \\
\text{s.t.} & F_0 - \sum_{i=1}^M \mu_i F_i \succeq 0, \earay \right.
\ee where $F_i$ are constant symmetric matrices (cf.~\cite{Todd}).
In \reff{stand:sdp}, let $N$ be the length of matrices $F_i$, and
$M$ be the number of variables. For pairs $(n,m)$, if the
semidefinite relaxation matrix length $N<1000$, we test $50$
instances of $\mF$ randomly; otherwise if $N>1000$, we test 10
instances of $\mF$ randomly. For a range of values of $(n,m)$, the
computational results are shown in
Table~\ref{random:best:rank1:sym:all}.

\begin{table} \centering
{ \scriptsize
\begin{tabular}[htb]{|r||c||c|c|c|c|c|c|c|c|} \hline
$(n,m)$ & (N,M)   & \multicolumn{3}{c|}{time (min,med,max)} & {\tt aprxerr} (min,med,max)\\
\hline
(10,3) &  (66,1000)   &  0:00:01& 0:00:01 & 0:00:03& (7.9e-9,~4.5e-8,~2.9e-6)  \\
\hline (20,3) &   (231,10625)     &   0:00:03& 0:00:08& 0:00:13&
(2.4e-9,~3.6e-7,~4.3e-6)
\\ \hline
(30,3) &  (496,46375)     & 0:01:14& 0:01:29& 0:02:01&
(9.1e-9,~7.4e-7,~1.4e-5)
\\ \hline (40,3) &  (861,135750) &   0:06:32& 0:10:04&   0:13:09& (1.3e-9,~4.6e-6,~2.3e-3)
    \\ \hline
    (50,3) & (1326,316250)   & 0:12:39& 0:13:34&0:14:01& (3.2e-9,~1.3e-6,~2.0e-3)
    \\ \hline
(15,4) &  (120,3060)   &   0:00:01& 0:00:03&  0:00:04&
(4.0e-9,~1.1e-7,~1.3e-6)
\\ \hline(20,4) &  (210,8854)  &0:00:52&0:01:09& 0:01:25&(1.2e-8,~1.8e-7,~6.3e-3) \\ \hline
(25,4) & (325,20475)   & 0:00:30 & 0:00:35& 0:00:56& (4.7e-9,~1.3e-7,~1.0e-5)  \\
  \hline(30,4) & (465,40919)   &   0:06:03& 0:07:36&0:09:31&(1.2e-8,~1.1e-6,~9.6e-4) \\ \hline
 (35,4) &   (630,73815)    & 0:02:46& 0:04:57 & 0:06:54&
(4.1e-8,~1.6e-7,~7.4e-3)
 \\
   \hline
 (10,5) &  (286,8007)
 & 0:00:08& 0:00:14& 0:00:17& (4.3e-8,~4.1e-7,~4.1e-6) \\
\hline (15,5) &  (816,54263)    & 0:03:46& 0:03:58 &  0:07:24&
(4.4e-8,~2.5e-6,~1.1e-3)
\\ \hline (20,5)&
 (1771,230229)    & 0:28:14& 0:30:30&  0:43:27& (4.7e-7,~3.7e-6,~5.7e-6) \\ \hline
 (10,6) &  (220,5004)    & 0:00:11 & 0:00:14 & 0:00:20 &  (1.3e-7,~6.4e-7,~3.5e-2) \\
 \hline (15,6) & (680,38759)    &0:03:14 & 0:04:19 &   0:04:53 & (4.8e-8,~2.5e-3,~4.9e-2)
\\ \hline (20,6)
& (1540,177099)   & 0:39:28& 0:45:39 & 0:54:59  & (2.8e-8,~6.6e-5,~1.0e-2)\\
\hline
\end{tabular}
} \caption{\small{Computational results in
Example~\ref{random:exm:sym:1}. Here, $m$ is the tensor order, $n$
is the tensor dimension, $N$ is the length of matrices and $M$ is
the number of variables in the semidefinite relaxations.} }
\label{random:best:rank1:sym:all}

\end{table}

From~Table \ref{random:best:rank1:sym:all}, we can observe that
Algorithms~\ref{alg:sym:even} and \ref{alg:sym:odd} generally
produce accurate best rank-$1$ approximations in a short time. For
some very big problems, like 3-tensors of dimension $40$, or
$4$-tensors of dimension $35$, we are able to get accurate best
rank-$1$ approximations within a reasonable time. For most
instances, we are able to get best rank-$1$ approximations, because
the computed matrices $M(y^*), M(z^*)$ have rank one. For a few
instances, their ranks are bigger than one, and the errors ${\tt
aprxerr}$ are a bit relatively large, like in the order of $10^{-3}$
or $10^{-2}$. This is probably because the semidefinite relaxations
are not very tight.
\end{exm}


\begin{exm}\label{SHOPM:compare}
Here we explore the performance of the Symmetric Higher Order Power
Method (SHOPM) in Kofidis and Regalia~\cite[Algorithm~2]{SHOPM2002},
which is widely used  in getting rank-$1$ approximations for
symmetric tensors. SHOPM can be easily implemented.
Generally, SHOPM generates a sequence of rank-$1$
tensors $\mu_k (v^k)^{\otimes m}$ with each $\|v^k\|=1$ and $\mu_k =
f(v^k)$. It is usually hard to check whether $\mu_k (v^k)^{\otimes
m}$ converges to a best rank-$1$ approximation or not.  Since
semidefinite relaxations often get best rank-$1$ approximations, we
can use them to check convergence of SHOPM. We write a
straightforward Matlab code to implement SHOPM, and use the
truncated HOSVD to generate starting points (cf.~\cite{LMV00b}),
which is commonly used in applications.
We terminate iterations of SHOPM if either $|\mu_{k+1}-\mu_{k}|
< 10^{-8}$ or it runs over $1000$ iterations. The tensor $\mF \in
\mt{S}^m(\re^n)$ we used is given as
\[
\begin{small} \label{SHOPM:compare:deg34}
(\mF)_{i_1,\cdots,i_m} =\sin\big(i_1 + \cdots + i_m \big).
\end{small}
\]
Let $\lmd \cdot u^{\otimes m}$ be the rank-$1$ approximation
generated by Algorithm~\ref{alg:sym:even} (for even orders) or
Algorithm~\ref{alg:sym:odd} (for odd orders), and $\mu \cdot
v^{\otimes m}$ be the rank-$1$ approximation generated by SHOPM.
Their qualities are measured by the residuals:
\[
\text{RES}_{\tt sdp} =\|\mF-\lambda \cdot u^{\otimes m}\|, \qquad
\text{RES}_{\tt shopm} =\|\mF-\mu \cdot v^{\otimes m}\|.
\]
For $m=3,4$ and a range of $n$ (we choose multiples of $5$),
the computational results are presented in Tables
\ref{sym:exm:deg3:SHOPM:results} and
\ref{sym:exm:deg4:SHOPM:results}.
For cleanness of comparisons, only two decimal digits are presented.
For all the cases, the computed matrices $M(y^*)$ and $M(z^*)$ are rank-$1$, so
Algorithms~\ref{alg:sym:even} and \ref{alg:sym:odd} found best
rank-$1$ approximations.
As we can see, for such cases, SHOPM did not get the best, except $(n,m)=(35,3)$.
The computational results for the case $(n,m)=(20,4)$
are not shown, because the software {\tt SDPNAL} experiences
numerical troubles and the semidefinite relaxations cannot be solved accurately.
%
%
As for the consumed time, Algorithms~\ref{alg:sym:even} and
\ref{alg:sym:odd} take up to a few minutes, while SHOPM takes up to
tens of seconds. Generally, SHOPM takes less time, but it
might not find best rank-1 approximations. On the other
hand, the residuals $\text{RES}_{\tt shopm}$, generated by SHOPM,
are not significantly bigger than the residuals $\text{RES}_{\tt
sdp}$, generated by semidefinite relaxations. This shows that SHOPM
performs reasonably well in getting good rank-$1$ approximations,
although it might not get the best.


\begin{table}  \centering
\begin{scriptsize}
\begin{tabular}{|c|r|r|r|r|c|c|c|c|c|c|c|c|c|c|} \hline
$n$   & $|\lambda|$  & $|\mu|$ & $\text{RES}_{\tt sdp}$ & $\text{RES}_{\tt shopm}$ \\
\hline
 10 & 12.12 & 3.01  &   18.79   & 22.16 \\ \hline
     15   & 22.07 &   13.74    &  34.65     &  38.72  \\ \hline
       20  & 32.98 & 26.99 &53.96 & 57.19 \\ \hline
      25   & 44.46  & 33.33  &   76.39   & 81.86  \\ \hline
       30 &  61.23  &   13.03   & 98.75   & 115.46 \\ \hline
     35 &   74.68 &  74.68  &    125.94      &  125.94   \\ \hline
      40 &  92.39     & 53.88    &153.18  & 170.58\\ \hline
\end{tabular}
\end{scriptsize}
\caption{\small{Computational results for Example
\ref{SHOPM:compare:deg34} with $m=3$. }}
\label{sym:exm:deg3:SHOPM:results}
\end{table}

\begin{table}  \centering
\begin{scriptsize}
\begin{tabular}{|c|r|r|r|r|c|c|c|c|c|c|c|c|c|c|} \hline
$n$   & $|\lambda|$  & $|\mu|$ & $\text{RES}_{\tt sdp}$ & $\text{RES}_{\tt shopm}$  \\
\hline  
10 & 27.27 & 1.18  &65.25  & 70.70 \\ \hline
 15   & 61.42   & 50.43   & 146.77   &  150.89 \\ \hline
%
%
 25   &158.22   & 122.02   &   412.65  & 424.76  \\ \hline
  30 &  241.65  &   208.81   &    588.73  & 601.16 \\ \hline
35   &  313.30  &  155.74   &  807.56   &  852.09 \\ \hline
 40 &414.38 & 129.91    &  1052.75 & 1123.89 \\ \hline
\end{tabular}
\end{scriptsize}
\caption{\small{Computational results for Example
\ref{SHOPM:compare:deg34} with $m=4$.}}
\label{sym:exm:deg4:SHOPM:results}
\end{table}

\end{exm}

\subsection{Nonsymmetric tensor examples}

In this subsection, we present numerical results for nonsymmetric
tensors.
In Algorithm~\ref{alg:nonsym}, if $\rank\,K(w^*)=1$, the output
$\lmd\cdot u^{1}\otimes \cdots \otimes u^m$ is a best rank-$1$
approximation of $\mF$. If $\rank\, K(w^*)>1$, $\lmd\cdot
u^{1}\otimes \cdots \otimes u^m$ might not be the best. However, the
quantity
\[
F_{\text{ubd}} := \sqrt{\big|F_{\max}^{\text{sdp}}\big|}
\]
is always an upper bound of $|F(x^1,\ldots,x^m)|$ on
$\mathbb{S}^{n_1-1}\times \cdots \times \mathbb{S}^{n_{m}-1}$. Like
in \reff{aprox:ratio}, we can measure the quality of $\lmd\cdot
u^{1}\otimes \cdots \otimes u^m$ by the error \be
\label{aprox:ratio:nonsym} {\tt aprxerr} := \big|
|F(u^1,\cdots,u^m)|  - F_{\text{ubd}} \big|/ \max\{1, F_{\text{ubd}}
\}. \ee Like the symmetric case, we define the {\it best rank-$1$
approximation ratio} of a tensor $\mF \in \re^{n_1\times \cdots
\times n_m}$ as (cf. Qi~\cite{Qi2011rank1}) \be
\label{bst:apx:ro:nonsym} \rho(\mF) := \max\limits_{ \mc{X} \in
\re^{n_1\times \cdots \times n_m}, \rank \, \mc{X} = 1 }
\frac{|\langle \mF,\mc{X} \rangle| }{ \|\mF\| \| \mc{X} \| }. \ee
Clearly, if $\lmd\cdot u^{1}\otimes \cdots \otimes u^m$, with each
$\|u^i\|=1$ and $\lmd = F(u^{1}, \ldots, u^m)$, is a best rank-$1$
approximation of $\mF$, then $ \rho(\mF) = |\lmd|/\|\mF\|. $

\begin{exm}
(\cite[Example 3]{LMV00b}) Consider the tensor $\mF \in \re^{2
\times 2 \times 2 \times 2}$ with
\begin{equation*}
\begin{small}
\begin{aligned}\label{data:exm:deg4:nonsym}
& \mF_{1111} =25.1, \quad \mF_{1212} = 25.6, \quad \mF_{2121} =
24.8,\quad \mF_{2222} = 23,
\end{aligned}
\end{small}
\end{equation*}
and the resting entries are zeros. Applying
Algorithm~\ref{alg:nonsym}, we get the rank-$1$ tensor $\lmd \cdot
u^1 \otimes u^2 \otimes u^3 \otimes u^4 $ with
\[
\lambda = 25.6000, \quad u^1 = (1,0), \quad u^2 = (0,1),  \quad u^3
= (1,0),  \quad u^4= (0,1).
\]
It takes about 0.3 second. The matrix $K(w^*)$ has rank one, so
$\lmd \cdot u^1 \otimes u^2 \otimes u^3 \otimes u^4 $ is a best
rank-$1$ approximation. The error {\tt aprxerr} =8.9e-10, the ratio
$\rho(\mF)=0.5194$, the residual $\| \mF - \lmd \cdot u^1 \otimes
u^2 \otimes u^3 \otimes u^4 \| = 42.1195$, and $\|\mF\| = 49.2890$.

\end{exm}

\begin{exm}
(\cite[Example 1]{Qi2011rank1}) Consider the tensor $\mF \in \re^{3
\times 3 \times 3}$ with
%
\begin{equation*}
\begin{small}
\begin{aligned}
& \mF_{111} = 0.4333 , \mF_{121} = 0.4278,\mF_{131} = 0.4140, \mF_{211} = 0.8154, \mF_{221}  = 0.0199, \\
& \mF_{231} = 0.5598 , \mF_{311} = 0.0643 ,\mF_{321} =0.3815 , \mF_{331} =0.8834 , \mF_{112} = 0.4866,\\
& \mF_{122} = 0.8087, \mF_{132} = 0.2073, \mF_{212} = 0.7641, \mF_{222} = 0.9924, \mF_{232} = 0.8752, \\
& \mF_{312} = 0.6708, \mF_{322} = 0.8296, \mF_{332} =0.1325, \mF_{113} =0.3871,\mF_{123} =0.0769, \\
& \mF_{133} =0.3151, \mF_{213} =0.1355, \mF_{223} =0.7727, \mF_{233}
=0.4089, \mF_{313} =0.9715,
\\
& \mF_{323} =0.7726, \mF_{333} =0.5526.
\end{aligned}
\end{small}
\end{equation*}
Applying Algorithm~\ref{alg:nonsym}, we get the rank-$1$ tensor
$\lambda \cdot u^1 \otimes u^2 \otimes u^3 $ with
\[
\lambda = 2.8167, \qquad u^1 =  (0.4281,0.6557,0.6220),
\]
\[
u^2 =  (0.5706,0.6467,0.5062), \quad u^3 =  (0.4500,0.7094,0.5424).
\]
It takes less than one second. The matrix $K(w^*)$ has rank one, so
$\lambda \cdot u^1 \otimes u^2 \otimes u^3 $ is a best rank-$1$
approximation. The error {\tt aprxerr} = 3.9e-8, the ratio
$\rho(\mF)=0.9017$, the residual $\| \mF - \lmd \cdot u^1 \otimes
u^2 \otimes u^3 \| = 1.3510$, and $\|\mF\| =3.1239$.

\end{exm}

\begin{exm}(\cite[Section~4.1]{Jocobitensor2013})
Consider the tensor $\mF \in \re^{3 \times 3 \times 3}$ with
\begin{equation*}
\begin{small}
\begin{aligned}
& \mF_{111} = \ \ 0.0072 , \mF_{121} =-0.4413   ,\mF_{131} = \ \ 0.1941 , \mF_{211} = -0.4413 , \mF_{221}  =\ \ 0.0940 , \\
& \mF_{231} = \ \ 0.5901, \mF_{311} = \ \  0.1941  ,\mF_{321} =-0.4099 , \mF_{331} = -0.1012, \mF_{112} = -0.4413,\\
& \mF_{122} = \ \  0.0940 , \mF_{132} =  -0.4099, \mF_{212} = \ \ 0.0940 , \mF_{222} = \ \ 0.2183, \mF_{232} = \ \  0.2950, \\
& \mF_{312} =\ \ 0.5901  , \mF_{322} = \ \ 0.2950 , \mF_{332} = \ \ 0.2229, \mF_{113} =\ \ 0.1941 ,\mF_{123} =\ \ 0.5901, \\
& \mF_{133} =  -0.1012, \mF_{213} = -0.4099, \mF_{223} =\ \  0.2950
, \mF_{233} = \ \  0.2229, \mF_{313} = -0.1012,
\\
& \mF_{323} =\ \  0.2229, \mF_{333} = -0.4891.
\end{aligned}
\end{small}
\end{equation*}
We apply Algorithm~\ref{alg:nonsym}, and get an upper bound
$F_{\text{ubd}} = 1.0000$. The computed matrix $K(w^*)$ has rank
three, so we use {\tt fmincon} to improve the solution and get the
rank-$1$ tensor $\lambda \cdot u^1 \otimes u^2 \otimes u^3 $ with
\[
\lambda = 1.0000, \qquad u^1 =  (0.7955,0.2491,0.5524),
\]
\[
u^2 =  (-0.0050,0.9142,-0.4051), \quad u^3 =
(-0.6060,0.3195,0.7285).
\]
It takes less than one second. Since $\lmd = F(u^1,u^2,u^3) =
F_{\text{ubd}}$, we know $\lambda \cdot u^1 \otimes u^2 \otimes u^3
$ is a best rank-$1$ approximation, by Theorem~\ref{thm:LLM00}. The
error {\tt aprxerr} = 6.0e-9, the ratio $\rho(\mF) = 0.5773$, the
residual $\| \mF - \lmd \cdot u^1 \otimes u^2 \otimes u^3 \| =
1.4143$, and $\|\mF\| =1.7321$.

\end{exm}

\begin{exm}\label{nonsym:deg:3:exm}
Consider the tensor $\mF \in \re^{ n \times n \times n}$ given as
\[
\begin{small}
(\mF)_{i_1,i_2,i_3 } = \cos \big(  i_1+2i_2 +3i_3 \big).
\end{small}
\]
For $n=5$, we apply Algorithm~\ref{alg:nonsym}, and get the rank-$1$
tensor $\lambda \cdot u^1 \otimes u^2 \otimes u^3$ with $\lambda
=6.0996 $ and
\begin{equation*}
\begin{small}
\begin{aligned}
u^1 &= (-0.4296,-0.5611,-0.1767,0.3701,0.5766),\\
u^2 & = (0.6210,-0.2956,-0.3750,0.6077,-0.1308),\\
u^3 & = (-0.4528,0.4590,-0.4561,0.4441, -0.4231).
\end{aligned}
\end{small}
\end{equation*}
It takes around 0.3 seconds. The matrix $K(w^*)$ has rank one, so
$\lambda \cdot u^1 \otimes u^2 \otimes u^3$ is a best rank-$1$
approximation. The error ${\tt aprxerr}$=3.1e-9, the ratio
$\rho(\mF)=0.7728$, the residual $\| \mF - \lmd \cdot u^1 \otimes
u^2 \otimes u^3 \| = 5.0093$ and $\|\mF\|= 7.8930$.

For a range of values of $n$ from $5$ to $40$, we apply
Algorithm~\ref{alg:nonsym} to get rank-$1$ approximations. The
computational results are listed in
Table~\ref{nonsym:deg:3:exm:results}. For all of them, the matrices
$K(w^*)$ have rank one, so we get best rank-$1$ approximations. Most
errors {\tt aprxerr} are very tiny. For $n=40$, the semidefinite
relaxation is very large (the matrices have length $1600$, and there
are $672399$ variables); it was not solved very accurately by {\tt
SDPNAL}.

\begin{table}  \centering
\begin{scriptsize}
\begin{tabular}{|c|c|c|c|c||c|c|c|c|c|} \hline
$n$   &  time & $\lambda$ & {\tt aprxerr}  & $\rho(\mF)$  &  $n$ &   time & $\lambda$ & {\tt aprxerr} & $\rho(\mF)$   \\
\hline  5  &   0:00:01 & 6.100    & 3.1e-9   &  0.77 & 10  &   0:00:02 & 14.79    & 5.8e-9 & 0.66   \\
\hline  15    & 0:00:04 & 25.48  &  2.8e-8  & 0.62 &    20   &   0:00:20 &   33.70  & 1.2e-9 & 0.53   \\
\hline  25 &   0:01:16 & 46.80  & 1.2e-8 &  0.53&  30  &    0:02:57 &   64.91 &   8.2e-9  & 0.56   \\
\hline  35  &    0:13:09  &  80.77   & 6.5e-10  & 0.55&  40  &    0:27:07 &  95.09  &   4.5e-5  &0.53 \\
 \hline
\end{tabular}
\end{scriptsize}
\caption{\small{Computational results for Example
\ref{nonsym:deg:3:exm}.}} \label{nonsym:deg:3:exm:results}
\end{table}

\end{exm}

\begin{exm}
\label{nonsym:exm:deg4} Consider the tensor $\mathcal{F} \in
\re^{n\times n \times n \times n}$ given as
\begin{equation*}
\begin{small}
     (\mF)_{i_1,\cdots,i_4} =\left\{
        \begin{aligned}
        & \sum\limits_{j=1}^4 \arcsin\Big((-1)^{i_j}
          \frac{j}{i_j} \Big),\quad  & \text{if all}\,\, i_j\geq j,\\
        & 0,& \text{otherwise}.
        \end{aligned}
    \right.
    \end{small}
\end{equation*}
For $n=5$, we apply Algorithm~\ref{alg:nonsym}, and get the rank-$1$
tensor $\lambda \cdot u^1 \otimes u^2 \otimes u^3\otimes u^4$ with
$\lambda = 15.3155 $ and \be \nn
\begin{small}
\begin{aligned}
u^1  &= (0.6711,0.2776,0.4398,0.3285,0.4138),\\
u^2  &=  (0,0.1709,0.6708,0.3985,0.6017),\\
u^3  &= (0,0,0.8048,0.1805,0.5655),\\
 u^4 & = (0,0,0,-0.0073,-0.9999).
\end{aligned}
\end{small}
\ee It takes about 3.8 seconds. The matrix $K(w^*)$ has rank one, so
$\lambda \cdot u^1 \otimes u^2 \otimes u^3\otimes u^4$ is a best
rank-$1$ approximation. The error ${\tt aprxerr}$=9.2e-10, the ratio
$\rho(\mF)=0.7076$, the residual $\| \mF - \lmd \cdot u^1 \otimes
u^2 \otimes u^3 \| = 15.2957$, and $\|\mF\|=21.6454$.

For a range of values of $n$ from $5$ to $12$, we apply
Algorithm~\ref{alg:nonsym} to find rank-$1$ approximations. The
computational results are shown in Table
\ref{nonsym:exm:deg4:results}. For all of them, the matrices
$K(w^*)$ have rank 1, so we get best rank-$1$ approximations. The
approximation errors ${\tt aprxerr}$ are all very tiny. For $n\leq
8$, Algorithm~\ref{alg:nonsym} takes a few seconds; for $n = 9, 10,
11$, it takes a couple of minutes. For $n=12$, it takes about four
hours; in this case, the semidefinite relaxation is very big (the
matrices have length $1728$ and there are $474551$ variables).
%
%
\begin{table}
  \centering
\begin{scriptsize}
\begin{tabular}{|c|c|c|c|c||c|c|c|c|c|} \hline
  $n $    &  time & $\lambda$ & {\tt aprxerr} & $\rho(\mF)$ &  $n $    &  time & $\lambda$ & {\tt aprxerr} & $\rho(\mF)$  \\
\hline 5  & 0:00:04 & 15.32 & 9.2e-10& 0.71&
 6  &   0:00:06 &  25.39     &   6.7e-9& 0.75      \\  \hline
 7  & 0:00:17&  33.04&   7.2e-8&   0.69 &
8  &  0:00:30 & 44.66   &   3.8e-8& 0.71   \\
\hline
 9 & 0:02:44&   54.40 &  5.9e-8& 0.68 &
10  &  0:05:56&  67.35    &   1.1e-8& 0.69 \\
\hline
 11 &  0:14:13&   78.93 &  5.9e-9&   0.67 &
 12  & 2:00:50& 93.02 &   2.4e-8&  0.68   \\
 \hline
\end{tabular}
\end{scriptsize}
\caption{\small{Computational results for Example
\ref{nonsym:exm:deg4}.}}\label{nonsym:exm:deg4:results}
\end{table}

\end{exm}

\begin{exm}\label{nonsym:deg:5:exm}
Consider the tensor $\mF \in \re^{n \times n \times n \times n
\times n}$ given as
\[
\begin{small}
(\mF)_{i_1,i_2,i_3,i_4,i_5} = \sum\limits_{j=1}^5(-1)^{j+1}\cdot j
\cdot \exp\{ -i_j \}.
 \end{small}
\]
For $n=4,5,6$, we apply Algorithm~\ref{alg:nonsym} to get rank-$1$
approximations. The computational results are listed in
Table~\ref{nonsym:deg:5:exm:results}. All the computed matrices
$K(w^*)$ have rank one, so best rank-$1$ approximations are found
for all of them. The errors {\tt aprxerr} are all very tiny. In
Table~\ref{nonsym:deg:5:exm:results}, the pair $(N,M)$ measures the
sizes of semidefinite relaxations, with $N$ the length of matrices
and $M$ the number of variables. For $n=4,5$, it takes a short time;
for $n=6$, it takes about 20 minutes.

\begin{table}
\begin{scriptsize}
\centering
\begin{tabular}{ |c|c|c|c|l| }
\hline    $n$& items  & values & $u^i$ & \quad \quad \quad \quad \quad vector entries  \\
\hline
  \multirow{5}{*}{$4$} & (N,M)   & (256,9999)  & $u^1$& $(0.5776,0.4950,0.4646,0.4534)$\\
   & $ \lambda$ &  30.1125   &$u^2$ & $(0.3279,0.4956,0.5573,0.5800)$\\
 &   time  & 0:00:07 &$u^3$ & $(0.7268,0.4679,0.3727,0.3376)$\\
 & {\tt aprxerr}  &  1.1e-10  &$u^4$ & $(0.0998,0.4636,0.5974,0.6467)$\\
 &$\rho(\mF)$   & 0.85    &$u^5$ & $(0.8982,0.3793,0.1884,0.1182)$ \\ \hline
\multirow{5}{*}{5} & (N,M)   &  (625,50624)      & $u^1$& $(0.5282,0.4515,0.4233,0.4129,0.4091)$\\
   & $ \lambda$ & 48.8437    &$u^2$ & $(0.2691,0.4249,0.4822,0.5033,0.5110)$\\
 &   time  & 0:01:20 &$u^3$ & $(0.6865,0.4456,0.3570,0.3244,0.3124)$\\
 & {\tt aprxerr}  & 3.3e-7   &$u^4$ & $(0.0339,0.3734,0.4983,0.5442,0.5612)$\\
 &$\rho(\mF)$   &   0.83   &$u^5$ & $(0.8822,0.3861,0.2037,0.1365,0.1118)$ \\ \hline
 \multirow{5}{*}{6} & (N,M)   &  (1296,194480)     & $u^1$& $(0.4915,0.4183,0.3915,0.3816,0.3779,0.3766)$\\
   & $ \lambda$ & 71.9071  &$u^2$ & $(0.2266,0.3751,0.4298,0.4499,0.4573,0.4600)$\\
 &   time  & 0:19:58  &$u^3$ & $(0.6557,0.4259,0.3413,0.3102,0.2988,0.2946)$\\
 & {\tt aprxerr}  &   8.6e-8  &$u^4$ & $(-0.0120,0.3124, 0.4317,0.4757,0.4918,0.4977)$\\
 &$\rho(\mF)$   & 0.81  &$u^5$ & $(0.8707,0.3875,0.2097,0.1443,0.1202,0.1114)$ \\ \hline
\end{tabular}
\end{scriptsize}
\caption{\small{Computational results for Example
\ref{nonsym:deg:5:exm}.} }\label{nonsym:deg:5:exm:results}
\end{table}

\end{exm}

\begin{exm}
\label{nonsym:exm:fail} Let $B$ be the symmetric matrix {\small
\[
 \bbm 1 & 0 & 0 & 0 & -1/2 & 0& 0& 0& -1/2 \\
0 &2 &0 &-1/2 &0 &0& 0 &0 &0 \\
0 &0 &0& 0& 0 &0 &-1/2 &0 &0 \\
0 &-1/2& 0 &0 &0& 0& 0 &0 &0 \\
-1/2& 0& 0 &0 &1 &0& 0 &0 &-1/2\\
0 &0& 0& 0& 0& 2& 0& -1/2& 0 \\
0 &0 &-1/2 &0 &0 &0 &2 &0 &0 \\
0 &0 &0 &0& 0& -1/2& 0& 0& 0 \\
-1/2 &0 &0 &0 &-1/2& 0& 0& 0 &1 \ebm .
\]
}
The eigenvalues of $B$ are
\[
\frac{2-\sqrt{5}}{2}, \quad 0,  \quad \frac{3}{2}, \quad
\frac{2+\sqrt{5}}{2},
\]
which are all less than $3$. Consider the tensor $\mF \in \re^{3
\times 3 \times 9}$ such that
\[
F^{sq}(x^1,x^2) = (x^1 \otimes x^2)^T (3 I_9 - B ) (x^1 \otimes
x^2).
\]
The bi-quadratic form $3\|x^1\|_2^2 \|x^2\|_2^2 - F^{sq}(x^1,x^2)$
is nonnegative but not SOS (cf.~\cite{biQudnotsos1975}). The minimum
of $(x^1 \otimes x^2)^T B (x^1 \otimes x^2)$ over $\mathbb{S}^2
\times \mathbb{S}^2$ is zero (\cite[Example 5.1]{NieBQ2009}), so
$F_{\max} = 3$. We apply Algorithm~\ref{alg:nonsym}. The computed
matrix $K(w^*)$ has rank 4, which is bigger than one. The upper
bound $F_{\max}^{\text{sdp}} = 3.0972$. Applying {\tt fmincon}, we
get the improved tuple $(u^1,u^2,u^3)$ and $\lmd$ as
\[
u^1 = (0,1,0), ~ u^2 = (1,0,0),~ u^3 =
(0,0.1246,0,0,0,0,-0.9922,0,0),
\]
\[
 \lambda  = F(u^1,u^2,u^3)=1.7321 = \sqrt{F_{\rm max}}.
\]
So, $\lambda \cdot u^1 \otimes u^2 \otimes u^3$ is a best rank-$1$
approximation, by Theorem~\ref{thm:LLM00}. The ratio
$\rho(\mF)=0.4083$, the residual $\| \mF - \lmd \cdot u^1 \otimes
u^2 \otimes u^3 \|= 3.8730$ and $\|\mF \| =4.2426$.

\end{exm}

\begin{exm} \label{random:exm:nonsym:1}
(random examples) 
We explore the performance of
Algorithm~\ref{alg:nonsym} on randomly generated nonsymmetric
tensors $\mF \in \re^{n_1 \times \cdots \times n_m}$. The entries of
$\mF$ are generated obeying Gaussian distributions (by {\tt randn}
in Matlab). As in \reff{stand:sdp}, let $N$ be the length of
matrices and $M$ be the number of variables in the semidefinite
relaxations. If $N<1000$, we generate 50 instances of $\mF$
randomly; if $N>1000$, we generate 10 instances of $\mF$ randomly.
We apply Algorithm~\ref{alg:nonsym} to get rank-$1$ approximations.
The computational results for orders $m=3,4,5$ are shown in Tables
\ref{random:best:rank1:cubic:tensor},
\ref{random:best:rank1:4order:tensor}, and
\ref{random:best:rank1:5order:tensor} respectively. When $n_1
=\cdots = n_m$, the sizes of the semidefinite relaxations are
typically much larger than the sizes for symmetric tensors. For
instance, for $(n,m)=(40,3)$, $(N,M)=(861,135750)$ for the symmetric
case, while $(N,M)=(1600,672399)$ for the nonsymmetric case.
Typically, Algorithm~\ref{alg:nonsym} takes more time than
Algorithms~\ref{alg:sym:even} and \ref{alg:sym:odd}, when the input
tensors have same dimensions and orders.

We can observe from Tables \ref{random:best:rank1:cubic:tensor},
\ref{random:best:rank1:4order:tensor}, and
\ref{random:best:rank1:5order:tensor} that for most instances,
Algorithm~\ref{alg:nonsym} is able to get best rank-$1$
approximations very accurately, within a reasonable short time. For
a few cases, the errors are a bit relatively large, around
$10^{-2}$, which is probably because the semidefinite relaxations
are not very tight.

\begin{table}
 \centering
\begin{scriptsize}
\begin{tabular}{|c| c| c|c|c|c|c|c|} \hline
$(n_1,n_2,n_3)$ &   \multicolumn{3}{c|}{time (min,med,max)}&  {\tt aprxerr} (min,med,max) \\
\hline $(10\times 10\times 10)$ & 0:00:02 & 0:00:02& 0:00:03& (1.6e-9,2.2e-8,2.7e-7) \\
\hline $(15\times 15 \times 15)$  & 0:00:10 &0:00:12&0:00:18&(5.9e-9,5.8e-7,2.1e-3)\\
\hline $(20\times 20 \times 20)$  & 0:00:05& 0:00:48&0:01:24&(1.9e-9,5.7e-7,5.2e-3)\\
\hline  $(25\times 25 \times 25)$  &  0:00:40& 0:02:26&  0:04:57&
(3.2e-9,4.6e-7,5.1e-2)
\\
\hline  $(30\times 30 \times 30)$   &   0:05:48& 0:07:57& 0:11:31&(3.6e-8,1.6e-3,3.5e-2)\\
\hline $(35\times 35\times 35)$   & 0:21:43& 0:27:04& 1:00:05&(1.1e-5,7.7e-3,5.7e-2)\\
\hline  $(40\times 40 \times 40)$ &  1:10:05&1:30:24&1:36:24&(4.8e-4,9.4e-4,1.4e-2)\\
\hline
\end{tabular}
\end{scriptsize}
\caption{\small{Computational results for Example
\ref{random:exm:nonsym:1} with
$m=3$.}}\label{random:best:rank1:cubic:tensor}
\end{table}

\begin{table}  \centering
\begin{scriptsize}
\begin{tabular}{|c|c|c|c|c|c|c|c|} \hline
 ($n_1,n_2,n_3,n_4$)  &  \multicolumn{3}{c|}{time (min,med,max)}&  {\tt aprxerr} (min,med,max) \\
\hline ($\ 5\times\ 5\times \ 5 \times \ 5)$ &  0:00:02&0:00:03&0:00:05&(1.0e-10,1.7e-8,3.1e-7)   \\
\hline ($\ 8\times\ 8\times \ 8 \times \ 8)$ &  0:00:09& 0:00:17 &0:00:28& (2.3e-7,1.5e-6,1.1e-5)   \\
\hline
 ($10\times10\times 10 \times 10)$ & 0:00:57& 0:01:52& 0:10:24&(9.4e-8,2.0e-6,6.6e-3)  \\  \hline
  ($15\times15\times \ 5 \times 15)$ & 0:02:18& 0:07:53& 0:13:54&(1.8e-7,3.7e-7,3.7e-3) \\  \hline
 ($12\times12\times 12 \times 12)$ & 0:07:24&0:10:47&0:39:07&(1.7e-7,3.3e-6,2.7e-2)   \\  \hline
 ($20\times20\times \ 5 \times 20)$ & 1:23:06 &1:55:14& 2:36:27&(4.0e-8,3.7e-4,2.0e-2) \\\hline
\end{tabular}
\end{scriptsize}
\caption{\small{Computational results for Example
\ref{random:exm:nonsym:1} with $m=4$.}}
\label{random:best:rank1:4order:tensor}
\end{table}

\begin{table}\centering
\begin{scriptsize}
\begin{tabular}{|c|c|c|c|c|c|c|c|} \hline
 ($n_1,n_2,n_3,n_4,n_5$)  &  \multicolumn{3}{c|}{time (min,med,max)}& {\tt aprxerr} (min,med,max)  \\
\hline ($  5\times 5\times 5 \times 5 \times 5)$ & 0:00:14&0:00:24&0:00:35& (7.2e-8,3.7e-7,3.6e-6)  \\
\hline ($10\times5\times 5 \times 4 \times 10)$ &
0:00:57&0:01:20&0:03:27&(9.7e-8,4.7e-7,1.5e-5)  \\ \hline
($10\times5\times 8 \times 5\times 10)$ & 0:21:16& 0:40:06& 1:22:28& (2.1e-6,1.3e-4,2.2e-3)   \\
\hline
 ($8\times8\times 8 \times 4\times 10)$ &  1:11:02&1:24:29& 2:53:40&(1.4e-7,2.4e-3,1.6e-2) \\
\hline
\end{tabular}
\end{scriptsize}
\caption{\small{Computational results for Example
\ref{random:exm:nonsym:1} with $m=5$.}}
\label{random:best:rank1:5order:tensor}
\end{table}
\end{exm}


\begin{exm}\label{HOPM:compare:deg34}
In this example, we explore the performance of the Higher Order
Power Method (HOPM) proposed in De Lathauwer~et
al.~\cite[Algorithm~3.2]{LMV00b}, which is widely used for computing
rank-$1$ approximations for nonsymmetric tensors. HOPM can be easily
implemented. Typically, HOPM generates a sequence of rank-$1$
tensors $\mu_k v^{k,1}\otimes \cdots  \otimes v^{k,m}$ (with each
$\|v^{k,j}\|=1$). It is usually very hard to check whether this
sequence converges to a best rank-$1$ approximation or not. Since
Algorithm~\ref{alg:nonsym} often produces best rank-$1$
approximations, we can use it to check convergence of HOPM. We write
a straightforward Matlab code to implement HOPM, and terminate its
iterations if either $|\mu_{k+1}-\mu_k| < 10^{-8}$ or it runs over
$1000$ iterations. Like for the symmetric case, we use the truncated
HOSVD (cf.~\cite{LMV00b}) to generate starting points. The tensor $\mF \in
\re^{n \times \cdots \times n}$ we used is given as
\[
\begin{small}
(\mF)_{i_1,\cdots,i_m} =
  \tan \left(i_1 - \frac{i_2 }{2} + \cdots + (-1)^{m+1} \frac{i_m}{m}  \right).
 \end{small}
\]
Let $\lmd \cdot u^1 \otimes \cdots \otimes  u^m$ be the rank-$1$
approximation returned by Algorithm~\ref{alg:nonsym}, and $\mu \cdot
v^1 \otimes \cdots \otimes v^m$ be the one returned by HOPM.
As in Example \ref{SHOPM:compare}, we measure their qualities by the
residuals:
\[
\text{RES}_{\tt sdp} =\|\mF-\lambda \cdot u^1 \otimes \cdots
 \otimes u^m \|, \qquad \text{RES}_{\tt hopm} =\|\mF-\mu
\cdot v^1 \otimes \cdots \otimes  v^m \|.
\]
For $m=3,4$ and a range of $n$, the computational results are presented in Tables
\ref{nonsym:exm:deg3:HOPM:results} and \ref{nonsym:exm:deg4:HOPM:results}.
For cleanness of comparisons, only two decimal digits are presented.
For all the cases, Algorithm~\ref{alg:nonsym} produced best rank-$1$
approximations (because the computed matrices $K(w^*)$ have rank one).
HOPM does not get the best for most of them.
The computational results for the case $(n,m)=(10,4)$
are not show, because the software {\tt SDPNAL}
experiences numerical troubles and the semidefinite relaxations
cannot be solved accurately.
%
%
%
As for the consumed time, Algorithm~\ref{alg:nonsym} takes up to a
few minutes, while HOPM takes up to tens of seconds. Generally,
HOPM takes less time, but it might not find best rank-1
approximations. On the other hand, the residual $\text{RES}_{\tt
hopm}$, generated by HOPM, is not significantly bigger than
$\text{RES}_{\tt sdp}$, generated by semidefinite relaxations. This
shows that HOPM still gives reasonably well rank-1 approximations,
though it might not produce the best.


\begin{table}  \centering
\begin{scriptsize}
\begin{tabular}{|c|c|c|r|r|c|c|c|c|c|c|c|c|c|c|} \hline
$n$   & $|\lambda|$  & $|\mu|$ & $\text{RES}_{\tt sdp}$   & $\text{RES}_{\tt hopm}$ \\
\hline  15   &449.19    &  349.73  &  1264.41   &   1295.45     \\
\hline 20 & 508.82   & 382.66  &   1780.56   &1811.88   \\
\hline 25   & 579.62   & 442.98   &  2304.82  &   2334.94    \\
\hline 30 &   655.10   &  492.50 &  3022.60  & 3053.31  \\
\hline 35 &  709.39  &   555.07 & 3595.36   &3622.40   \\
\hline
\end{tabular}
\end{scriptsize}
\caption{\small{Computational results for Example
\ref{HOPM:compare:deg34} with $m=3$.} }
\label{nonsym:exm:deg3:HOPM:results}
\end{table}

\begin{table}  \centering
\begin{scriptsize}
\begin{tabular}{|c|c|c|r|r|c|c|c|c|c|c|c|c|c|c|} \hline
$n$   & $|\lambda|$  & $|\mu|$ & $\text{RES}_{\tt sdp}$ & $\text{RES}_{\tt hopm}$  \\
%
%
\hline 7   &  180.22   & 171.39   &  596.18 &   598.78      \\
\hline 8 & 191.73   & 117.53  & 691.98    & 708.37  \\
\hline 9   & 233.49  &  211.69   &   904.78  & 910.13       \\
%
%
\hline 11   & 386.04 &  379.96 &   1512.92   &  1514.71    \\
\hline 12 &  550.21  & 357.85 & 2069.34   & 2111.13
\\ \hline
\end{tabular}
\end{scriptsize}
\caption{\small{Computational results for Example
\ref{HOPM:compare:deg34} with $m=4$.}}
\label{nonsym:exm:deg4:HOPM:results}
\end{table}

\end{exm}

\section{Conclusions and discussions}

This paper proposes semidefinite relaxations to find best rank-$1$
approximations, for both symmetric and nonsymmetric tensors. Three
algorithms based on semidefinite relaxations are presented,
respectively for even symmetric tensors, odd symmetric tensors and
nonsymmetric tensors. As shown in our numerical experiments, they
very often produce best rank-$1$ approximations, which can be
checked mathematically.

In Section~\ref{section:sdp:relaxation}, we only presented the
lowest order semidefinite relaxations. When they are not tight,
higher order semidefinite relaxations can be applied, and we can get
a convergent hierarchy of semidefinite relaxations, as shown by
Lasserre \cite{Las01}. Indeed, this hierarchy almost always
converges within finitely many steps, as recently shown in
\cite{Nie-opcd}. A question that is closely related to best rank-1
approximations is to compute extreme Z-eigenvalues for symmetric
tensors of even orders. Hu, Huang and Qi~\cite{Hu2013} proposed a
convergent sequence of SOS relaxations for computing maximum or
minimum Z-eigenvalues.

Semidefinite relaxations in rank-$1$ tensor approximations are often
large scale. The traditional interior point methods are generally
too expensive to be used for solving such SDPs. In our work, the
Newton-CG augmented Lagrange method by Zhao, Sun and Toh
\cite{ZST10} is applied. The software {\tt SDPNAL} \cite{sdpnal} is
based on this method. It is very suitable for solving such large
scale SDPs. In most of our numerical experiments, {\tt SDPNAL}
successfully solved the semidefinite relaxations, and  we got best
rank-$1$ approximations. For a few cases, {\tt SDPNAL} has trouble
to get accurate solutions. This is probably because these
semidefinite programs are degenerate. Typically, {\tt SDPNAL}
\cite{sdpnal} works well when the SDP is nondegenerate. For
degenerate SDPs, {\tt SDPNAL} might experience numerical troubles
and may have very slow convergence. It is an important future work
to design efficient methods for solving large scale, possibly
degenerate, semidefinite programs arising from tensor
approximations.

In practice, the optimal matrices of semidefinite relaxations
\reff{maxf:sdpr}, \reff{minf:sdpr}, and \reff{max:qG:sdpr} often have rank-1. For
such semidefinite relaxations, the low rank methods by Burer and
Monteiro \cite{Burer2003,Burer2005} can be applied.
They can also be very efficient in applications. On the other
hand, these kinds of methods are not always guaranteed to get
optimal solutions of semidefinite relaxations, because local nonlinear
optimization methods are mainly used.
%
%

In Algorithms~\ref{alg:sym:even}, \ref{alg:sym:odd} and
\ref{alg:nonsym}, if the matrices $M(y^*)$, $M(z^*)$, $K(w^*)$ have
rank one, we are guaranteed to get best rank-$1$ approximations. If
their ranks are bigger than one, we can improve the rank-$1$
approximation by using some nonlinear optimization methods. In our
numerical experiments, we used the Matlab Optimization Toolbox
function {\tt fmincon} to do this. More advanced nonlinear
optimization methods can also be applied, e.g., the Quasi-Newton
method by Savas and Lim \cite{SavLim10}.
%
%

\bigskip
\noindent {\bf Acknowledgement} \, The research was partially
supported by the NSF grant DMS-0844775.
The authors would like very much
to thank the associate editor and two anonymous referees
for fruitful comments on improving the paper.


\end{document}